\newtheorem{theorem}{Theorem}
\newtheorem{lemma}{Lemma}
\begin{document}
	
	\author{Mohd Harun, Saurabh Kumar Singh}
	
	\title{Shifted convolution sum for $GL(3) \times GL(2)$ with weighted average}
	
	\address{ Saurabh Kumar Singh \newline {\em Department of Mathematics and Statistics, Indian Institute of Technology, Kanpur, India; \newline  Email: saurabs@iitk.ac.in
	} }

	\address{Mohd Harun \newline {\em Department of Mathematics and Statistics, Indian Institute of Technology, Kanpur, India; \newline  Email: harun@iitk.ac.in
	} }
\subjclass[2010]{Primary 11F03, 11F11, 11F30, 11F37, 11F66}
\date{\today}
\keywords{Averages Shifted convolution sum, Maass forms, Voronoi summation formula, Poisson summation formula}
	
	\maketitle
	
	\begin{abstract}
		In this paper, we will prove the non-trivial bound for the weighted average version of shifted convolution sum for $GL(3) \times GL(2)$, i.e. for arbitrary small $\epsilon >0$ and $X^{1/4+\delta} \leq H \leq X$ with $\delta >0$, we prove
		\[
		\frac{1}{H}\sum_{h=1}^\infty \lambda_f(h) V\left( \frac{h}{H}\right)\sum_{n=1}^\infty  \lambda_{\pi}(1,n)  \lambda_g (n+h) W\left( \frac{n}{X} \right)\ll X^{1-\delta+\epsilon},
		\] 
		where $V,W$ are smooth and compactly supported functions, $\lambda_f(n), \lambda_g(n)$ and $\lambda_{\pi}(1,n)$ are the normalized $n$-th Fourier coefficients of holomorphic or Hecke-Maass cusp forms $f,g$ for $SL(2,\mathbb{Z})$, and Hecke-Maass cusp form $\pi$ for $SL(3,\mathbb{Z})$, respectively. 
	\end{abstract}

	\section{Introduction}

Let $X$ be a variable, and let $a(n)$ and $b(n)$ be arbitrary arithmetical functions. We define the corresponding shifted convolution sum as:
\[S(X) = \sum_{n \leq X} a(n) \, b(n + h),\]
where the shift $h$ is a non-zero integer. The problem of estimating these sums, denoted as $S(X)$, for various arithmetical functions $a(n)$ and $b(n)$ has a long history in analytic number theory. Nontrivial estimates of these sums have played significant roles in many central problems, such as subconvexity, quantum unique ergodicity, Chowla's conjecture, additive divisor problems, and numerous other intriguing questions (for further details, refer to the introduction section of \cite{r20}). Additional relevant information and developments can be found in references \cite{r2}, \cite{r1}, \cite{r6}, \cite{r12}, \cite{r15}, \cite{r16}, and \cite{r17}. For the sake of motivation, this section will focus solely on the literature directly related to our specific problem.
\vspace{0.1cm}

There is extensive literature concerning shifted convolution sums involving the $GL(2)$ Fourier coefficients. In 1995, Pitt examined a shifted convolution sum with one of the arithmetical functions being the ternary divisor function $d_3(n)$, which is the $n$-th coefficient of the Dirichlet series for $\zeta^3(s)$. Here $\zeta(s)$ denotes the classical Riemann zeta function. For the values $0 < r < X^{1/24}$ and $\epsilon > 0$, Pitt \cite{r18} demonstrated that:
\[
\sum_{n \leq X} d_3(n) \, \lambda_g(rn-1) \ll_{\zeta^3,f,\epsilon} X^{1-1/72 + \epsilon},
\]
where $\lambda_g(n)$ denotes the normalized $n$-th Fourier coefficient of a general holomorphic or Hecke-Maass cusp form for $SL(2, \mathbb{Z})$. This represented the first instance of a study of a shifted convolution sum involving the $GL(3)$ Fourier coefficients, albeit in a specific case. Following Pitt's result, a substantial body of literature has emerged regarding shifted convolution sums involving $ d_3(n)$. Recently, Munshi replaced $d_3(n)$ with the normalized $n$-th Fourier coefficients $\lambda_{\pi}(1,n)$ of a general Hecke-Maass cusp form $\pi$ for $SL(3, \mathbb{Z})$. Utilizing Jutila's variant of the circle method and the concept of factorizable moduli, Munshi proved \cite{r19} the following result (after a small correction in his Lemma $11$ of the published version):
\[
\sum_{n=1}^\infty \lambda_\pi(1,n) \, \lambda_g(n+h) \, V\left( \frac{n}{X}\right) \ll_{\pi,f, \epsilon} X^{1-1/26+ \epsilon},
\]
where $1 \leq |h| \leq X^{1+\epsilon}$ is an integer, and $V$ is a compactly supported smooth function. Later, Ping Xi \cite{r20} studied the same shifted convolution sum using a different approach for the involved character sum but still employing Jutila's variant of the circle method and obtained a better estimate of size $X^{1-1/22+\epsilon}$. Building upon Munshi's remark \cite{r19}, Sun \cite{r21} investigated the average version of the shifted convolution sum studied by Munshi. Employing the same variant of the circle method (Jutila's) and considering $r^{5/2}X^{1/4 + 7\delta/2} \leq H \leq X$ with $\delta > 0$, she established:
\[
\frac{1}{H} \sum_{h=1}^\infty V\left( \frac{h}{H}\right) \sum_{n=1}^\infty  \lambda(n) \,  \lambda_g(rn+h) \, W\left( \frac{n}{X} \right) \ll_{\pi,f,\epsilon} X^{1-\delta+ \epsilon},
\]
where $W$ represents another compactly supported smooth function, $r \geq 1$ is an integer, and $\lambda(n)$ is either $\lambda_{\pi}(1,n)$ or $d_3(n)$.

The objective of this paper is to extend Sun's result by considering the weighted average version of the shifted convolution sum, with the weight being the normalized $n$-th Fourier coefficients $\lambda_f(n)$ of a general holomorphic or Hecke-Maass cusp forms for $SL(2, \mathbb{Z})$, incorporated into the $h$ sum. More precisely, we are examining the following weighted average version of general $GL(3) \times GL(2)$ shifted convolution sums, denoted as $S(H, X)$:
\begin{equation} \label{A1}
	S(H,X) =  \frac{1}{H} \sum_{h=1}^\infty \lambda_f(h) \, V\left( \frac{h}{H}\right) \sum_{n=1}^\infty \lambda_{\pi}(1,n) \, \lambda_g (n+h) \,W\left( \frac{n}{X} \right).
\end{equation}
We will demonstrate that employing $\lambda_f(h)$ as the weight allows us to achieve additional savings in the character sum. By applying the Cauchy-Schwartz inequality and the estimates for the $GL(2)$ and $GL(3)$ Fourier coefficients from Rankin-Selberg theory, we obtain the following (trivial) estimate:
\begin{equation} \label{A2}
	S(H,X) \ll_{\pi, f, g, \epsilon} X^{1+\epsilon}. 
\end{equation}
In this article, by utilizing a different variant of the circle method developed by Duke, Friedlander, and Iwaniec, we will establish the following non-trivial estimate for $S(H, X)$ as the main result of this paper.
\begin{theorem}\label{main theorem}
		Let $f,g$ and $\pi$  be the Hecke-Maass cusp forms for the full modular groups $SL(2, \mathbb{Z})$  and $SL(3,\mathbb{Z})$, respectively. Let $\lambda_f(n),\lambda_g(n)$  and $\lambda_{\pi}(1,n)$ be the corresponding normalized $n$-th Fourier coefficients of the forms $f,g$  and $\pi$. Let $V_1$ and $V_2$ be the smooth functions and compactly supported on the interval $[1,2]$.\\
  
	(1)	For arbitrary small $\epsilon > 0$, and $X^{1/4 + \delta} \leq H \leq \sqrt{X} $ with $\delta > 0$, we have
		\begin{align*}
			\frac{1}{H}\,\sum_{h=1}^\infty\, \lambda_f(h)\, V_1\left( \frac{h}{H}\right)\,\sum_{n=1}^\infty \, \lambda_{\pi}(1,n)\,  \lambda_g (n+h)\, V_2\left( \frac{n}{X} \right)\, \ll \,\frac{X^{5/4 + \epsilon}}{H},
		\end{align*} where the implied constants are depending upon $f,g$ $\pi$ and $\epsilon$.\\

  (2)	For arbitrary small $\epsilon > 0$, and $\sqrt{X} \leq H \leq {X}$, we have
		\begin{align*}
			\frac{1}{H}\,\sum_{h=1}^\infty\, \lambda_f(h)\, V_1\left( \frac{h}{H}\right)\,\sum_{n=1}^\infty \, \lambda_{\pi}(1,n)\,  \lambda_g (n+h)\, V_2\left( \frac{n}{X} \right)\, \ll \,\frac{X^{1+\epsilon}}{H^{1/2}}.
		\end{align*} where the implied constants are depending upon $f,g$ $\pi$ and $\epsilon$.
	\end{theorem}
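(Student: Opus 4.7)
The plan is to apply the Duke--Friedlander--Iwaniec variant of the circle method. Writing $\lambda_g(n+h) = \sum_m \lambda_g(m)\,\delta(m-n-h)$ and detecting $\delta$ via the DFI decomposition
\[
\delta(\ell) \;=\; \frac{1}{Q}\int_{\mathbb{R}} \sum_{q\leq Q} \frac{1}{q}\sum_{a\,(q)}^{\ast} e\!\left(\frac{a\ell}{q}\right) g(q,x)\, e\!\left(\frac{x\ell}{qQ}\right) dx,
\]
with the natural choice $Q\asymp \sqrt{X}$ (since $|n+h-m|\lesssim X$), transforms $S(H,X)$ into an outer sum over $q\leq Q$, an average over $a\bmod q$, an $x$-integral, and three twisted arithmetic sums: one over $n$ with coefficients $\lambda_{\pi}(1,n)$, one over $m$ with coefficients $\lambda_{g}(m)$, and one over $h$ with coefficients $\lambda_{f}(h)$, each carrying an additive character $e(\pm a(\cdot)/q)$.

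The next stage applies three Voronoi summations separately. On the $n$-sum, $GL(3)$ Voronoi produces a dual sum of length $\asymp q^{3}/X$ together with a hyper-Kloosterman character factor. On the $m$-sum, $GL(2)$ Voronoi yields a dual of length $\asymp q^{2}/X$, which for $q \asymp \sqrt{X}$ is of size $O(1)$ and essentially collapses the variable. The crucial step is applying $GL(2)$ Voronoi also to the $h$-sum, producing a dual of length $\asymp q^{2}/H$ with an additional twist $e(-\bar a h'/q)$; this third Voronoi is available precisely because the weight $\lambda_{f}(h)$ is present, and it is the mechanism by which the weighted average beats the unweighted shifted-convolution bound of Sun. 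After these three transforms the inner sum over $a$ is a complete exponential sum that, after standard completion and an application of Weil's bound, exhibits square-root cancellation and contributes at most $q^{3/2+\epsilon}$.

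Analyzing the oscillatory integrals (the $x$-integral from DFI together with the Bessel kernels from the three Voronoi formulae) by stationary phase localizes each dual variable and effectively removes the integral at the cost of a smooth factor. Assembling the resulting multiple sum over $q$ and $(n',m',h')$ and invoking the Rankin--Selberg second-moment bounds to absorb the Fourier coefficients reduces the problem to a purely geometric estimation. The resulting bound has shape $X^{1+\epsilon}\cdot F(Q,H,X)$, and when $Q$ is optimized one naturally obtains two regimes: for $H\leq \sqrt{X}$ the $h'$-sum is long and the $n'$-sum saturates the estimate, giving $X^{5/4+\epsilon}/H$, whereas for $H\geq \sqrt{X}$ the $h'$-sum shortens and the balance yields $X^{1+\epsilon}/H^{1/2}$, with the transition at $H=\sqrt{X}$ corresponding to the point where the dual $h'$-variable has length below one.

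The main obstacle is the character-sum analysis after combining the three Voronoi transforms: one must verify that the hyper-Kloosterman factor from $GL(3)$ combines cleanly with the two additive twists from $GL(2)$ to give a complete sum with full square-root cancellation, uniformly in the dual variables $n',m',h'$. A secondary difficulty lies in the several-variable stationary-phase analysis of the amplitudes from the Bessel transforms and the $g(q,x)$ factor from DFI: one needs to track the error terms from the DFI truncations and the off-stationary contributions carefully enough that the dichotomy at $H=\sqrt{X}$ falls out of a single unified optimization, rather than requiring a separate treatment for each regime.
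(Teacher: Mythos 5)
Your setup — DFI delta method with $Q\asymp\sqrt{X}$, then $GL(2)$ Voronoi on the $m$- and $h$-sums and $GL(3)$ Voronoi on the $n$-sum, with dual lengths $q^2/X$, $q^2/H$, $q^3/X$ — matches the paper exactly, as does your observation that the third Voronoi in $h$ is the payoff for the weight $\lambda_f(h)$. But there is a genuine gap at the end: after the three Voronoi transforms and the evaluation of the $a$-sum, the total saving over the delta-method loss is only $XH/Q$ (the paper records this explicitly in its sketch), which leaves the bound at $X^{3/2+\epsilon}/H$ — worse than trivial for $H\le\sqrt{X}$ and never reaching $X^{5/4+\epsilon}/H$. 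Your ``assemble the multiple sum, absorb coefficients by Rankin--Selberg, and optimize $Q$'' step cannot close this deficit of $\sqrt{Q}=X^{1/4}$; no choice of $Q$ fixes it, since $Q\asymp\sqrt{X}$ is forced by the delta method. The missing idea is the paper's final step: apply Cauchy--Schwarz in the $GL(3)$ dual variable $n_2$ (to strip off $\lambda_\pi(n_2,n_1)$), open the square, and apply Poisson summation in $n_2$, treating the zero frequency (where the diagonal congruence forces $q_2=q_2'$, $\alpha=\alpha'$ and one saves the full length of $n_2$) and the non-zero frequencies (where one saves an extra $q$) separately. This is where the additional factor of $Q$ inside the square, i.e.\ $\sqrt{Q}$ overall, is gained, and it is what produces the two regimes $X^{5/4+\epsilon}/H$ and $X^{1+\epsilon}/H^{1/2}$ via the bounds $S_0, S_{\neq 0}\ll (X^{3/4}\ \text{or}\ X)H^{-1/2}(1+Q/H)^{1/2}$.

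A secondary inaccuracy: the $a$-sum is not estimated by completion and Weil. After combining the two $GL(2)$ additive twists with the $GL(3)$ Kloosterman factor, the $a$-sum is a Ramanujan sum and is evaluated exactly; the character sum degenerates (in the generic case) to $q\, e\bigl(\overline{(h-m)}\,n_2/q\bigr)$. The size of this ($q$ out of a trivial $q^{3/2}$, so a $\sqrt{q}$ saving) is consistent with your Weil heuristic, but the \emph{structure} — a clean additive character in $n_2$ — is essential, because it is precisely what makes the subsequent Poisson summation in $n_2$ effective. Treating the character sum only through an upper bound of $q^{3/2+\epsilon}$ discards the information on which the whole final step depends.
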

 
In this paper, we will also give a brief sketch for the proof of the following theorem where we are replacing the $GL(3)$ Fourier coefficient $\lambda_\pi(1,n)$ by the triple divisor function $d_3(n)$.
	
\begin{theorem}\label{TH2}
		Let $\lambda_f(n)$ and $\lambda_g(n)$ be as in the above theorem \eqref{main theorem}. Let $\lambda_{\pi}(1,n) = d_3(n)$, where $d_3$ is the triple divisor function.\\
  
  (1) For arbitrary small $\epsilon>0$ and $X^{1/4 + \delta} \leq H \leq \sqrt{X} $ with $\delta > 0$, we have
		\begin{align*}
			\frac{1}{H}\,\sum_{h=1}^\infty \,\lambda_f(h)\, V_1\left( \frac{h}{H}\right)\,\sum_{n=1}^\infty\, d_3(n)\,  \lambda_g (n+h)\, V_2\left( \frac{n}{X} \right)\, \ll\, \frac{X^{5/4 + \epsilon}}{H},
		\end{align*}where the implied constant will depend upon $f,g, \zeta^3$ and $\epsilon$.\\

  (2) For arbitrary small $\epsilon>0$ and $\sqrt{X} \leq H \leq X $, we have
		\begin{align*}
			\frac{1}{H}\,\sum_{h=1}^\infty \,\lambda_f(h)\, V_1\left( \frac{h}{H}\right)\,\sum_{n=1}^\infty\, d_3(n)\,  \lambda_g (n+h)\, V_2\left( \frac{n}{X} \right)\, \ll\, \frac{X^{1+\epsilon}}{H^{1/2}},
		\end{align*}where the implied constant will depend upon $f,g, \zeta^3$ and $\epsilon$.
	\end{theorem}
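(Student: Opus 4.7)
The plan is to mirror the proof of Theorem~\ref{main theorem} verbatim, with the sole substitution of the $GL(3)$ Voronoi summation formula for the Hecke--Maass cusp form $\pi$ by the Voronoi summation formula for the triple divisor function $d_3(n)$, obtained from the functional equation of $\zeta^3(s)$. The two formulas share the same rank-$3$ transform structure: the dual of $\sum_n d_3(n)\, e(an/q)\, F(n)$ decomposes into a polynomial main term (from the triple pole of $\zeta^3$ at $s=1$) plus oscillatory terms $\sum_m d_3(m)\, S(\dots;q)\, q^{-2}\, H_\pm(m)$, with $H_\pm$ an iterated Mellin--Barnes kernel having precisely the same transition range and the same stationary-phase profile as its $GL(3)$ counterpart. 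Consequently the magnitude of the dual integral transform, the arithmetic of the resulting Kloosterman-type complete character sums, the stationary-phase analysis, and the final Cauchy--Schwarz and Poisson summation in the $q$-modulus all carry over from Theorem~\ref{main theorem} without modification.

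In the order I would carry them out: first apply the Duke--Friedlander--Iwaniec circle method with the same choice of conductor $Q$ to detect $n+h=m$; next apply $d_3$-Voronoi to the $n$-sum, $GL(2)$-Voronoi to the $m$-sum carrying $\lambda_g$, and $GL(2)$-Voronoi to the $h$-sum carrying $\lambda_f$; simplify the resulting complete character sums; analyze the oscillatory integrals by stationary phase; then apply Cauchy--Schwarz in the dual variable $m$, open the square, and finish via Poisson in the modulus $q$. At every stage the numerics match those of Theorem~\ref{main theorem}, so the bounds $X^{5/4+\epsilon}/H$ and $X^{1+\epsilon}/H^{1/2}$ emerge in the two respective ranges of $H$.

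The main new point, and the only genuine obstacle, is the polynomial main term produced by the $d_3$-Voronoi, which has no analogue in the cuspidal setting. Its contribution to $S(H,X)$ factorizes as an innocent smooth integral against $V_2$ of size $O(X \log^2 X)$ multiplied by $\sum_h \lambda_f(h)\, V_1(h/H)$. Exhibiting square-root cancellation in the latter by $GL(2)$-Voronoi applied to $\lambda_f$ (equivalently, via the standard bound $\sum_{h \leq H} \lambda_f(h) \ll H^{1/2+\epsilon}$), and accounting for the $1/H$ normalization, yields a contribution of size $X H^{-1/2+\epsilon}$. Since $X/H^{1/2} \leq X^{5/4}/H$ precisely when $H \leq \sqrt{X}$, this contribution is dominated by the target bound in both ranges of $H$, and no analytic input beyond that already used in Theorem~\ref{main theorem} is required.
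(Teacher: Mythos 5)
Your overall plan---run the Theorem~\ref{main theorem} machinery with the $d_3$-Voronoi formula in place of the cuspidal $GL(3)$ formula and then deal separately with the polynomial main term---is exactly the paper's strategy, and your claim that the oscillatory (Kloosterman) part of the dual sum carries over with the same numerics is precisely what the paper asserts for its term $\mathcal{L}_4(H,X)$. Two small inaccuracies there: the dual coefficients are the divisor-type sums $\sigma_{0,0}\left(n_1/(n_3n_4),n_2\right)$ rather than $d_3(m)$ (harmless, since only Ramanujan-on-average bounds are used), and the concluding steps are Cauchy--Schwarz and Poisson in the $GL(3)$ dual variable $n_2$, not ``Poisson in the modulus $q$.''

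The genuine gap is in your treatment of the main term. The polynomial terms produced by the $d_3$-Voronoi formula arise from the $n$-sum \emph{inside} the delta-method expansion, so their contribution to $S(H,X)$ is not
\[
O(X\log^2 X)\cdot \frac1H\sum_h\lambda_f(h)\,V_1\!\left(\frac hH\right),
\]
but rather of the shape
\[
\frac{1}{HQ}\sum_{q\le Q}\frac1q\left(\frac{X}{q^2}\sum_{n_1\mid q}n_1\tau(n_1)P_i(n_1,q)\,\mu\!\left(\frac{q}{n_1}\right)\right) \sideset{}{^\star}\sum_{a\bmod q}\int_{\mathbb{R}} W(u)\psi(q,u)\,T_1(\ldots)\,T_3(\ldots)\,du,
\]
in which the $h$-sum $T_1$ and the $m$-sum $T_3$ (carrying $\lambda_g$) are still twisted by $e(\pm a\,\cdot/q)$, become coupled after the $a$-average through the Ramanujan sum $c_q(h-m)$, and are integrated against $\psi(q,u)e(\cdot\,u/(qQ))$. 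You cannot simply quote $\sum_{h\le H}\lambda_f(h)\ll H^{1/2+\epsilon}$: that untwisted sum never appears, and your displayed factorization has silently discarded the entire $m$-sum together with the $q,a,u$ structure. Trivial bounds on $T_1,T_3$ here give a contribution of size about $X^{3/2+\epsilon}$, so real work remains. The paper's route is to apply the $GL(2)$ Voronoi formula to both $T_1$ and $T_3$ (savings $H/Q$ and $X/Q$), bound the $n_1$-sum by $q^{1+\epsilon}$ and exploit the $a$-sum, which yields $O(X^{5/4+\epsilon}/H)$ for these main terms---acceptable in both ranges, but larger than your claimed $XH^{-1/2+\epsilon}$ and only just within the target in range (1). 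As written, your proposal does not establish this.
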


	\textbf{Notations:} For any complex number $z$, $e(z):= e^{2 \pi i z}$. By $A \asymp B$ we means that $ X^{-\epsilon} B \leq A \leq  X^{\epsilon}B$. By $A \sim B$ we mean that $B \leq A \leq 2B$. By the notation $X \ll Y$ we mean that there is a constant $c >0$ such that $|X| \leq c\, Y^{1+\epsilon} $. The implied constants during the calculations may depend on the cusp form $f,g,\pi$ and $\epsilon$. At various places, $\epsilon>0$ may be different.

\section{The Delta Method}\label{Delta}

In this paper, we are using a version of the delta method due to Duke, Friedlander, and Iwaniec. More specifically, we are using the expansion $(20.157)$ given in Chapter $20$ of \cite{r29}. The details for this section are taken from \cite[subsection 2.4]{r30}. Let $\delta: \mathbb{Z}\to \{0,1\}$ be defined by
	\[
	\delta(n, m)=
	\begin{cases}
		1 &\text{if}\,\,n=m \\
		0 &\text{otherwise}
	\end{cases}
	\]
	For $n,m\in\mathbb{Z}\cap [-2L,2L]$, we are using the following expression of the delta symbol to separate the oscillations
	\begin{equation}\label{s}
		\delta(n,m)=\frac{1}{Q}\,  \sum_{q \leq Q}\,\frac{1}{q}\,\,\, \sideset{}{^\star} \sum_{a\bmod q}e\left(\frac{(n-m) a}{q}\right)\int_{\mathbb{R}} \,\psi(q,x)\,e\left(\frac{(n-m) x}{q\,Q}\right) d x, 
	\end{equation} where $Q=2L^{1/2}$. Following properties of the function $\psi(q,u)$ are of our interest (see $(20.158)$ and $(20.159)$ of \cite{r29}, and \cite[Lemma 15]{r30})
	
	\begin{align}\label{delta}
		&\psi(q,x)=1+h(q,x),\,\,\,\,\text{with}\,\,\,\,h(q,x)=O\left(\frac{Q}{q }\left(\frac{q}{Q}+|x|\right)^A\right)\\
		&\psi(q,x)\ll |x|^{-A}\\
		&x^j \frac{\partial^j}{ \partial x^j} \psi(q, x) \ll \  \min \left\lbrace \frac{Q}{q}, \frac{1}{|x|} \right\rbrace \  \log Q
	\end{align}
	\vspace{0.2cm}
	
	for any $A>1$,  $j \geqslant 1$.  In particular, the second property implies that the effective range of integral in \eqref{s} is $[-L^{\epsilon}, L^{\epsilon}]$. It also follows that if $q \ll Q^{1- \epsilon}$ and $  x \ll Q^{- \epsilon} $, then  $ \psi(q, x)$ can
	be replaced by $1$ at a cost of a negligible error term. If $ q \gg Q^{1- \epsilon}$, then we get $ x^j \frac{\partial^j}{ \partial x^j} \psi(q, x) \ll Q^{\epsilon}$, for any $ j \geqslant 1$. If $ q \ll  Q^{1- \epsilon}$  and $  Q^{- \epsilon} \ll |x| \ll   Q^{ \epsilon}$, then $ x^j \frac{\partial^j}{ \partial x^j} \psi(q, x) \ll Q^{\epsilon}$, for any $ j \geqslant 1$. Hence in all cases, we can view $  \psi(q, x)$ as a nice weight function. 
	
\section{Sketch of the Proof (Theorem 1)}
In this section, we will give a brief sketch of the proof of our main Theorem \ref{main theorem}. Our main object of study is
\begin{align*}
 S(H,X) \,=\, \frac{1}{H}\sum_{h \sim H} \lambda_f(h) \, V_1\left(\frac{h}{H}\right) \sum_{n \sim X}^\infty  \lambda_{\pi}(1,n)  \lambda_{g}(n+h)\,V_2\left(\frac{n}{X}\right). 
\end{align*} After applying the delta method due to Duke, Friedlander, and Iwaniec, our object of study becomes,
\begin{align}\label{A105}
	S(H,X) = \frac{1}{HQ}&\sum_{q\sim Q}\frac{1}{q}\,\,\, \sideset{}{^\star} \sum_{a\bmod q}\int_{\mathbb{R}}\, \psi(q,u)
	\sum_{h \sim H} \lambda_f(h)  \,e\left(\frac{ah}{q}\right)\,V_1\left(\frac{h}{H}\right)\notag\\ 
	\times& \sum_{n \sim X}  \lambda_{\pi}(1,n)\, e\left(\frac{an}{q}\right)\,V_1\left(\frac{n}{X}\right)\,\sum_{m \sim Y}   \lambda_{g} (m)\,e\left(\frac{-am}{q}\right)\,V_1\left(\frac{m}{Y}\right)\,du.
\end{align}  For simplicity, we are taking the generic case i.e. $q \asymp Q = \sqrt{X}$, $Y \asymp X$, $1 \leq H \leq X$ and $V_1, V_2, V_3$ are compactly supported smooth functions supported on $[1,2]$. Note that the application of the delta symbol gives us a loss of size $X$. So we have to save $X$ and a little more in the expression for $S(H, X)$ given in equation \eqref{A105} to get our desired bound.\\

\textbf{Step 1: Application of $GL(2)$ Voronoi summation formula:}
We first apply the $GL(2)$ Voronoi summation formula given in Lemma \ref{voronoi} to sum over $m$ and $h$. We obtain that, their dual lengths are essentially supported on the size ${Q^2}/{Y}$ and ${Q^2}/{H}$. Hence, we obtained a saving $( =\sqrt{{\text{Initial Length}}/{\text{Dual Length}}})$ of size  ${Y}/{Q}$ and ${H}/{Q}$ in the two sums.\\ 

\textbf{Step 2: Application of GL(3) Voronoi summation formula:} Now we apply the $GL(3)$ Voronoi summation formula given in Lemma \ref{gl3voronoi} to sum over $n$. We obtain that the dual length is supported on the size ${Q^3}/{X}$. So, we got the saving of size ${X}/{Q^{3/2}}$ by this step in sum over $n$.\\
After the application of Voronoi summation formulae, we end up with the following object of study
\begin{align}\label{f1}
	S(H,X) \notag=& \frac{X^{2/3}Y^{3/4}}{Q^4\,H^{1/4}}\sum_{q\leq Q} \,\, \sum_{n_{2} \sim Q^3/X}  \frac{\lambda_{\pi}(n_{2},1)}{ n^{1/3}_{2}} \,  \\
	 &\times  \, \sum_{h\sim Q^2/H} \,\,\sum_{m\sim Q^2/Y} \frac{\lambda_f(h)}{h^{1/4}}  \frac{\lambda_g(m)}{m^{1/4}}\, \, \mathcal{C}(n_2, h, m, q)\, \mathcal{I}(...).
\end{align}
Here $\mathcal{I}(...)$ denotes the integral transform obtained on the application of Voronoi summation formulas. We will estimate $\mathcal{I}(...)$ trivially since it does not contain any oscillations. Also, $\mathcal{C}(n_2, h, m, q)$ denotes the character sum, which is given by
\begin{align*}
	\mathcal{C}(n_2, h, m, q) = \sideset{}{^\star} \sum_{a\, \bmod\, q} e\left( -\frac{\overline{a}h}{q}\right)e\left( \frac{\overline{a}m}{q}\right)  S\left( \bar{a},  \pm n_{2}; q\right)\,\rightsquigarrow\, q\,e\left( \frac{(\overline{h-m})n_2}{q}\right). 
\end{align*} Here $\rightsquigarrow$ means that the left-hand side essentially reduces to the right-hand side. We are saving $\sqrt{Q}$ in the $a$ sum. So far, our total saving over the trivial bound $X^2$ is given by
\begin{align*}
	\frac{Y}{Q}\times\frac{H}{Q}\times\frac{X}{Q^{3/2}}\times \sqrt{Q} \asymp \frac{X\,H}{Q}.
\end{align*} We need to save $Q/H$ and a little more to get our result.
We will get the additional savings in equation \eqref{f1}. Since the character sum reduces to an additive character in $n_2$ variable. So, when we apply the Cauchy- Schwartz inequality and then the Poisson summation formula to sum over $n_2$, we will save more than usual with the help of the congruence condition we obtained involving $n_2 \neq 0$ (See section \ref{nonzero} for details). \\

\textbf{Step 4: Cauchy- Schwartz inequality and Poisson summation formula:}
Next, we apply Cauchy-Schwartz inequality to sum over $n_2$, we obtained that our main sum $S(H, X)$ is dominated by the following expression
\begin{align*}
	\frac{X^{13/12}}{Q^2\,H^{1/4}}\,\left(\sum_{n_{2} \sim Q^3/X} \left |\,\sum_{q \sim Q}\,\sum_{h\sim Q^2/H}\,\frac{\lambda_f(h)}{h^{1/4}} \, \sum_{m\sim Q^2/Y} \frac{\lambda_g(m)}{m^{1/4}}\, \,e\left( \frac{(\overline{h-m})n_2}{q}\right)\, \,\right|^2\right)^{1/2}.
\end{align*} Opening the absolute valued square, we apply the Poisson summation formula to sum over $n_2$. In the zero frequency $n_2 =0$, we save the whole length of the diagonal i.e. $\frac{Q^3}{H}$. So we are saving more than we need in this case. In the non-zero frequency $n_2 \neq 0$, we are saving an extra $q$ because of the additive character involved. So our total savings in this case is of size $Q$, which is good enough as long as $H > X^{1/4}$.

\section{Preliminaries}	
In this section, we shall briefly recall some basic facts about $SL(2,\mathbb{Z})$ and $SL(3,\mathbb{Z})$ automorphic forms. We required very minimal details about these concepts. In fact, the Voronoi summation formulas and some standard estimates on Fourier coefficients of these $SL(2,\mathbb{Z})$ and $SL(3,\mathbb{Z})$ automorphic forms are all that we are using in this paper.

\subsection{Back ground on the Hecke-Maass cusp forms for $\bf SL(2,\mathbb{Z})$}
Let $f$ be a primitive holomorphic Hecke eigen form of integral weight $k$ for the full modular group $ SL(2, \mathbb{Z})$. For $z= x+ iy, y>0$, the normalized Fourier expansion of $f$  at the cusp $\infty$ is given by 
$$ f(z)= \sum_{n=1}^\infty \lambda_f (n) n^{(k-1)/2} e(nz) \hspace{1cm} ( \lambda_f(1) =1),$$ 
where $e(z) = e^{2\pi i z}$.  
Analogously, let $f(z)$ be a primitive Hecke-Maass cusp form for the group $SL(2, \mathbb{Z})$ with Laplacian eigenvalue $ \frac{1}{4} + \nu^2$.   The normalized Fourier expansion of $f$  at the cusp $\infty$ is given by 
\[   
2 \sqrt{y} \sum_{n \neq 0 } \lambda_f(n) K_{i\nu}  (2\pi |n| y ) e(n x), 
\] where $ K_{i\nu}$ denotes the $K$-Bessel function and $ \lambda_f(1) =1$. It follows from the Rankin-Selberg theory that the Fourier coefficients $\lambda_f (n)  $\textquotesingle s are bounded on average, namely:
\begin{equation}\label{s16}
	\sum_{n\leq X} |\lambda_f (n)|^2 = C_f X + O \left( x^{3/5} \right), 
\end{equation} for some constant $C_f > 0$. Ramanujan-Petersson conjecture predicts that $\lambda_f(n) \ll n^\epsilon$. This has been proved by Deligne in the case of holomorphic cusp forms, where he proves that $\lambda_f (n) \ll d(n)$. In the case of Maass cusp form the best-known result is  $\lambda_f (n) \ll n^{7/64 + \epsilon}$, proved by Kim and Sarnak( see \cite{r25}).

\vspace{0.5cm}

We shall use the following Voronoi type summation formula, which was first proved by Meurman \cite{r26}. 
\begin{lemma} \label{voronoi}
	{\bf Voronoi summation formula}:
	Let $u$ be a smooth and compactly supported function on the interval $(0, \infty)$ and $\lambda(n)$ be the nth Fourier coefficients of a Hecke-Maass forms for $SL(2, \mathbb{Z})$. We have
	\begin{equation} \label{varequation}
		\sum_{n=1}^\infty \lambda (n)\, e_q(an)\, u(n) = \frac{1}{q}\, \sum_{\pm} \sum_{n=1}^\infty\, \lambda( n) \,e_q(\pm\, \overline{a}n)\, U^{\pm} \left( \frac{n}{q^2}\right),
	\end{equation}
	where $ a \overline{a} \equiv 1 (\textrm{mod} \  q)$, and 
	\begin{align*}
		&U^{-} (y)= \frac{- \pi}{\sin( \pi i\nu)} \int_0^\infty u(x) \left\lbrace  Y_{2i\nu } + Y_{-2i\nu }\right\rbrace \left( 4\pi \sqrt{xy}\right) dx, \\
		&U^{+} (y)= 4\cosh( \pi \nu) \int_0^\infty u(x)  K_{2i\nu }  \left( 4\pi \sqrt{xy}\right) dx,
	\end{align*} where $Y_{2i\nu } $ and $ K_{2i\nu }$ are Bessel's functions of first and second kind and $e_q(x)= e^{\frac{2 \pi i x}{q}}$.  
\end{lemma}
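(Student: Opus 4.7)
The plan is to derive the formula by Mellin inversion combined with the functional equation of the additively twisted $L$-function of $f$. Let $\tilde{u}(s) = \int_0^\infty u(x)\, x^{s-1}\, dx$ denote the Mellin transform of $u$; since $u$ is smooth and compactly supported in $(0,\infty)$, $\tilde{u}$ is entire and decays faster than any polynomial on vertical strips. Mellin inversion gives $u(n) = (2\pi i)^{-1} \int_{(\sigma)} \tilde{u}(s)\, n^{-s}\, ds$ for any $\sigma > 0$. Substituting this into the left-hand side and interchanging the (absolutely convergent) sum and integral for $\sigma$ large, we obtain
\[
\sum_{n=1}^\infty \lambda(n)\, e_q(an)\, u(n) = \frac{1}{2\pi i} \int_{(\sigma)} \tilde{u}(s)\, L\bigl(s, f, a/q\bigr)\, ds,
\]
where $L(s, f, a/q) := \sum_{n=1}^\infty \lambda(n)\, e_q(an)\, n^{-s}$ is the additively twisted Dirichlet series attached to $f$.

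The key analytic input is the functional equation of $L(s, f, a/q)$. Since $f$ is cuspidal, this series extends to an entire function of $s$ and satisfies a functional equation of the shape $s \leftrightarrow 1-s$ that sends the character $a/q$ to $-\bar a/q$ and produces a sum of two archimedean gamma quotients $G_\pm(s)$ reflecting positive and negative frequencies. One derives this by writing $L(s, f, a/q)$ as a Mellin transform of $f$ along the vertical line above $a/q$ and applying the modular transformation of $f$ under a matrix in $SL(2,\mathbb{Z})$ with bottom row $(q,*)$ and top-left entry $a$ (which sends $a/q + iy$ to $-\bar a/q + i/(q^2 y)$). I then shift the contour from $\Re(s) = \sigma$ to $\Re(s) = 1-\sigma$; no poles are encountered because both $\tilde{u}(s)$ and $L(s, f, a/q)$ are entire. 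Applying the functional equation, changing variables $s \mapsto 1-s$, and pulling the resulting dual sum back outside the integral yields
\[
\frac{1}{q} \sum_{\pm} \sum_{n=1}^\infty \lambda(n)\, e_q(\pm \bar a n) \cdot \frac{1}{2\pi i} \int_{(\sigma)} \tilde{u}(s)\, G_\pm(s)\, (n/q^2)^{-s}\, ds.
\]

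Finally, one identifies the inner inverse Mellin integrals with $U^\pm(n/q^2)$. This is essentially a bookkeeping step: the Mellin--Barnes representations of $K_{2i\nu}(4\pi\sqrt{xy})$ and of $\{Y_{2i\nu} + Y_{-2i\nu}\}(4\pi\sqrt{xy})$ tested against $u(x)$ produce precisely the gamma quotients $G_\pm(s)$, up to the scalars $4\cosh(\pi\nu)$ and $-\pi/\sin(\pi i \nu)$ that appear in the statement. The main obstacle is the functional equation step, where one must establish the correct transformation $a/q \mapsto -\bar a/q$ together with the precise pair of archimedean factors $G_\pm(s)$; the appearance of two distinct Bessel kernels requires separating the contributions of positive and negative $n$ (equivalently, the even and odd parts of the additive character). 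This computation is carried out in detail by Meurman \cite{r26}, whose argument we follow.
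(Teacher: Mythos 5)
Your outline---Mellin inversion against the additively twisted Dirichlet series $L(s,f,a/q)$, the functional equation obtained from the automorphy of $f$ under a matrix with bottom row $(q,\ast)$, and the identification of the resulting gamma quotients with the $K$- and $Y$-Bessel kernels---is the standard derivation and is exactly the argument of the sources the paper itself cites, namely Meurman \cite{r26} and the appendix of \cite{r12}; the paper gives no proof beyond those citations. Your sketch is correct in its essentials and takes the same route.
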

\begin{proof}
	See \cite{r26} or appendix in \cite{r12}.
\end{proof}

\textbf{Remark 1}. If the function $u(x)$ is supported on $[X, 2X ]$ and satisfies $x^j\,u^{(j)}(x) \ll 1$. Using the properties of the Bessel's functions given in Lemma \ref{A4} and then repeated integration by parts, we can easily deduce that the integral $U^{\pm}$ is negligibly small if $n \gg q^2(qX)^{\epsilon}/X$. Hence the sum on the right-hand side of equation \eqref{varequation} is essentially supported on
$n \ll q^2(qX)^{\epsilon}/X$. For smaller values of $n$, we will use the trivial bound $U^{\pm}(n/q^2) \ll X$.

\subsection{Back ground on the Maass forms for $\bf SL(3,\mathbb{Z})$} \label{gl3 maass form} 
Let $\pi$ be a Maass form of type $(\nu_{1}, \nu_{2})$ for $SL(3, \mathbb{Z})$. We introduce the Langlands parameters $({\bf \alpha}_{1},{\bf \alpha}_{2},{\bf \alpha}_{3})$ of $\pi$, which are given by
$${\bf \alpha}_{1} = - \nu_{1} - 2 \nu_{2}+1, \, {\bf \alpha}_{2} = - \nu_{1}+ \nu_{2},  \, {\bf \alpha}_{3} = 2 \nu_{1}+ \nu_{2}-1.$$
By the work of Jacquet, Piatetskii-Shapiro, and Shalika, we have the Fourier Whittaker expansion for the Maass form $\pi(z)$ as follows
\begin{equation} \label{four-whi-exp}
	\pi_1(z) = \sum_{\gamma \in U_{2}\left(\mathbb{Z}\right) \backslash  SL(2,\mathbb{Z})} \sum_{m_{1}=1}^{\infty} \sum_{m_{2} \neq 0} \frac{\lambda_{\pi}(m_{1},m_{2})}{m_{1} |m_{2}|} \, W_{J}\left(M \left( {\begin{array}{cc}
			\gamma &  \\
			& 1 \\
	\end{array} } \right)
	z, \nu, \psi_{1,\frac{m_2}{|m_2|}}\right),
\end{equation}
where $z$ lies inside the corresponding fundamental domain for $SL(3, \mathbb{Z})$, $U_{2}(\mathbb{Z})$ is the group of upper triangular matrices with integer entries and ones on the diagonal,
$W_{J}\left(z,\nu,\psi_{1,1}\right)$ is the Jacquet-Whittaker function, and $M=\text{diag} \left(m_{1}|m_{2}|,m_{1},1\right)$ (cf. Goldfeld \cite{r24}). We consider $\pi$ to be the eigenfunction of all the Hecke operators with Fourier coefficients $\lambda_{\pi}(m_1, m_2)$, normalized so that $\lambda_{\pi}(1,1) = 1$

With the aid of the above terminology, we state the $GL(3)$-Voronoi summation formula in the following proposition.
\begin{lemma} \label{gl3voronoi}
	Let $\psi (x)$ be a compactly supported smooth function on $(0,\infty)$. Let $\lambda_{\pi}(m,n)$ be the $(m,n)$-th Fourier coefficient of a Maass form $\pi(z)$ for $SL(3,\mathbb{Z})$. Then we have
	\begin{align} \label{GL3-Voro}
		& \sum_{n=1}^{\infty} \lambda_{\pi}(m,n) e\left(\frac{an}{q}\right) \psi(n) \\
		\nonumber & =q  \sum_{\pm} \sum_{n_{1}|qm} \sum_{n_{2}=1}^{\infty}  \frac{\lambda_{\pi}(n_{2},n_{1})}{n_{1} n_{2}} S\left(m \bar{a}, \pm n_{2}; mq/n_{1}\right) \, G_{\pm} \left(\frac{n_{1}^2 n_{2}}{q^3 m}\right),
	\end{align} 
	where $G_{\pm}(x)$ is the integral transform, $(a,q)=1, \bar{a}$ is the multiplicative inverse modulo $q$ and $$S(a,b;q) = \sideset{}{^\star}{\sum}_{x \,\rm mod \, q} e\left(\frac{ax+b\bar{x}}{q}\right) $$
	is the Kloostermann sum.
\end{lemma}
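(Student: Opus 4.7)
The plan is to derive the $GL(3)$ Voronoi formula via the Mellin–Barnes representation and the functional equation of an additively twisted $L$-function, following the classical approach of Goldfeld and Li (and the appendix of Miller–Schmid). Set $\widetilde{\psi}(s)=\int_{0}^{\infty}\psi(x)\,x^{s-1}\,dx$, so that Mellin inversion gives
\[
\sum_{n=1}^{\infty}\lambda_{\pi}(m,n)\,e\!\left(\frac{an}{q}\right)\psi(n)=\frac{1}{2\pi i}\int_{(\sigma)}\widetilde{\psi}(-s)\,L_{a/q}(s)\,ds,
\]
where $L_{a/q}(s)=\sum_{n\ge 1}\lambda_{\pi}(m,n)\,e(an/q)\,n^{-s}$ and $\sigma$ is taken large enough for absolute convergence. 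The goal is to show that $L_{a/q}(s)$ has holomorphic continuation to $\mathbb{C}$ and obeys a functional equation of the shape $s \mapsto 1-s$ expressible through Kloosterman sums, so that shifting the contour to $\operatorname{Re}(s)=-\sigma'$ and applying the functional equation produces the right-hand side of \eqref{GL3-Voro}.

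The first step is to use the $GL(3)$ Hecke relation
\[
\lambda_{\pi}(m,n)=\sum_{d\mid (m,n)}\mu(d)\,\lambda_{\pi}(m/d,1)\,\lambda_{\pi}(1,n/d),
\]
to reduce $L_{a/q}(s)$ to a finite sum of Dirichlet series of the form $\sum_{n} \lambda_{\pi}(1,n)\,e(a'n/q')\,n^{-s}$ with $q'\mid qm$. For each of these, one writes the additive character via the Gauss-sum expansion over multiplicative characters $\chi \bmod q'$, producing linear combinations of $L(s,\pi\otimes\chi)$. Each of these standard twisted $L$-functions satisfies its own completed functional equation with gamma factors $\prod_{j=1}^{3}\Gamma_{\mathbb{R}}\bigl((s+\alpha_j)/2\bigr)$ (or the odd analogue), with root number encoded by Gauss sums. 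Re-assembling these functional equations and converting the Gauss-sum combinations back into additive characters produces Kloosterman sums $S(m\bar a,\pm n_{2};mq/n_{1})$ and a divisor sum $\sum_{n_{1}\mid qm}$, exactly as in \eqref{GL3-Voro}.

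The second step is to identify the kernels $G_{\pm}$. After the contour shift and functional equation, the surviving integrand is $\widetilde{\psi}(-s)\,\gamma_{\pm}(s)\,(n_{1}^{2}n_{2}/q^{3}m)^{s-1}$ where $\gamma_{\pm}(s)$ is the ratio of gamma factors coming from the functional equation, separated into its even/odd parts (which correspond to the two signs $\pm$). One then \emph{defines} $G_{\pm}(x)$ by the inverse Mellin transform
\[
G_{\pm}(x)=\frac{1}{2\pi i}\int_{(\sigma'')}\widetilde{\psi}(-s)\,\gamma_{\pm}(s)\,x^{s-1}\,ds,
\]
and verifies (via Stirling and standard contour manipulations) that $G_{\pm}$ decays rapidly enough for the sum over $n_{2}$ to converge, with effective support confined to $n_{1}^{2}n_{2}\ll (qm)^{1+\epsilon}/\|\psi\|$.

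The main obstacle is handling the combinatorial bookkeeping when $(n,qm)$ is non-trivial: the Hecke relation produces many terms, and one must check carefully that re-assembling the twisted functional equations yields exactly the divisor sum $\sum_{n_{1}\mid qm}$ and the Kloosterman sum with modulus $mq/n_{1}$, rather than a slightly different arithmetic expression. All other steps are standard once the functional equation of $L(s,\pi\otimes\chi)$ is in hand; the analytic content (contour shift, Stirling, Mellin inversion) is routine. Since this lemma is a classical result, in practice one simply cites Miller–Schmid or the appendix of Goldfeld–Li rather than redoing this derivation.
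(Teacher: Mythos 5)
The paper offers no proof of this lemma beyond the citation to X.\ Li's paper \cite{r31}, and your sketch is precisely the standard Mellin-transform/functional-equation derivation contained in that reference (and in Goldfeld--Li and Miller--Schmid), so you are following essentially the same route as the source the paper relies on. The only blemishes are a sign mismatch in your opening Mellin inversion (the integrand should be $\widetilde{\psi}(s)\,L_{a/q}(s)$, or else $\widetilde{\psi}(-s)$ paired with $L_{a/q}(-s)$) and the fact that the genuinely delicate step --- the bookkeeping for non-primitive additive twists that produces the divisor sum over $n_{1}\mid qm$ and the Kloosterman modulus $mq/n_{1}$ --- is only flagged rather than carried out, which is acceptable here since you correctly observe that this classical result is in practice quoted from the literature.
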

\begin{proof}
See \cite{r31}.
\end{proof}

The following lemma gives an asymptotic expansion for the integral transform $G_{\pm}(x)$.
\begin{lemma} \label{GL3oscilation}
	Let $G_{\pm}(x)$ be as above,  and  $g(x) \in C_c^{\infty}(X,2X)$. Then for any fixed integer $K \geq 1$ and $xX \gg 1$, we have
	\begin{equation*}
		G_{\pm}(x)=  x \int_{0}^{\infty} g(y) \sum_{j=1}^{K} \frac{c_{j}({\pm}) e\left(3 (xy)^{1/3} \right) + d_{j}({\pm}) e\left(-3 (xy)^{1/3} \right)}{\left( xy\right)^{j/3}} \, \mathrm{d} y + O \left((xX)^{\frac{-K+5}{3}}\right),
	\end{equation*}
	where $c_{j}(\pm)$ and $d_{j}(\pm)$ are some  absolute constants depending on $\alpha_{i}$,  $i=1,\, 2,\, 3$.  
\end{lemma}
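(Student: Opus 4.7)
The plan is to derive the claimed expansion from the Mellin--Barnes representation of $G_\pm(x)$ that is implicit in the $GL(3)$ Voronoi formula of Lemma \ref{gl3voronoi}. Writing $\tilde g(s) = \int_0^\infty g(y)\, y^{s-1}\, dy$ for the Mellin transform of $g \in C_c^\infty(X,2X)$, one has
\[
G_\pm(x) = \frac{x}{2\pi i} \int_{(\sigma)} \gamma_\pm(s)\, \tilde g(1-s)\, x^{-s}\, ds,
\]
where $\gamma_\pm(s)$ is the product $\prod_{i=1}^{3} \Gamma\!\left(\frac{s+\alpha_i}{2}\right)/\Gamma\!\left(\frac{1-s-\alpha_i}{2}\right)$ with sign conventions distinguishing the two transforms. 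This representation is standard and can be extracted from \cite{r31} or the Miller--Schmid treatment of the $GL(3)$ Voronoi formula.

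Next I would apply Stirling's asymptotic formula to each of the six gamma functions in $\gamma_\pm(s)$. Writing $s = \sigma + i\tau$ with $|\tau|$ large, one obtains an expansion
\[
\gamma_\pm(\sigma+i\tau) = |\tau|^{3\sigma - 3/2}\, e^{i\Phi_\pm(\tau)} \sum_{j=0}^{K-1} \frac{a_j^{\pm}(\sigma)}{|\tau|^{j}} + O\!\left(|\tau|^{3\sigma - 3/2 - K}\right),
\]
where $\Phi_\pm(\tau)$ has leading behaviour $\pm 3\tau(\log|\tau|-1)$. Substituting into the Mellin--Barnes integral and absorbing the factor $x^{-s}$ into the phase produces an oscillatory $\tau$-integral whose combined phase $\pm 3\tau\log|\tau| \mp 3\tau - \tau\log x$ has a unique non-degenerate stationary point at $|\tau| \asymp x^{1/3}$. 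Evaluating by stationary phase, and then re-expressing $\tilde g(1-s)$ through its inverse Mellin transform, one recovers the claimed main term $x\int g(y)(xy)^{-j/3} e(\pm 3(xy)^{1/3})\, dy$, with the constants $c_j(\pm)$ and $d_j(\pm)$ absorbing the coefficients produced by Stirling and stationary phase.

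For the error term, the key input is the rapid decay $\tilde g(1-\sigma-i\tau) \ll_N (1+|\tau|)^{-N}$, valid for any $N$ since $g \in C_c^\infty$. This makes the Mellin--Barnes integral absolutely convergent, and the truncation at order $K$, propagated through the stationary-phase evaluation and undone by an inverse Mellin step, delivers the stated error $O((xX)^{(-K+5)/3})$; the exponent $5/3$ arises from the combination of the loss $|\tau|^{3\sigma - 3/2}$ evaluated at $|\tau| \asymp x^{1/3}$ and the support length $X$ in the $y$-variable. The main technical obstacle is tracking the phase $\Phi_\pm(\tau)$ accurately enough through the lower-order corrections of Stirling so as to confirm that the leading oscillation is exactly $e(\pm 3(xy)^{1/3})$ and that the $j$-th subleading coefficient lives at the prescribed power $(xy)^{-j/3}$. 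The combinatorics of matching Stirling's subleading terms with the stationary-phase expansion is tedious, but it has been executed carefully in the literature (see \cite{r31} and references therein), which I would invoke for the closed-form identification of $c_j(\pm),\, d_j(\pm)$.
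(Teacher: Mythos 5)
The paper offers no proof of this lemma beyond the citation ``See \cite{r34}'', and your route --- the Mellin--Barnes representation of $G_{\pm}$ with the ratio of gamma factors, Stirling's expansion, and a stationary-phase evaluation at $|\tau| \asymp (xy)^{1/3}$ producing the oscillation $e\left(\pm 3(xy)^{1/3}\right)$ --- is exactly the argument carried out in that reference, so your approach coincides with the paper's. Your sketch is sound in structure; like the paper, you ultimately defer the closed-form identification of $c_j(\pm)$, $d_j(\pm)$ and the precise bookkeeping of the error exponent to the literature rather than executing it.
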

\begin{proof}
	See  \cite{r34}.
\end{proof}

The following lemma gives the Ramanujan bound for $\lambda_{\pi}(m,n)$ on average.	
\begin{lemma} \label{ramanubound}
	We have 
	$$\mathop{\sum \sum}_{n_{1}^{2} n_{2} \leq X} \vert \lambda_{\pi}(n_{1},n_{2})\vert ^{2} \ll \, X^{1+\epsilon}.$$
\end{lemma}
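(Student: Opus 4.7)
The plan is to deduce the bound from the Rankin-Selberg theory on $GL(3)$. Introduce the Dirichlet series
\[
D(s) \;=\; \sum_{n_1, n_2 \geq 1} \frac{|\lambda_\pi(n_1, n_2)|^2}{(n_1^2 n_2)^s},
\]
which converges absolutely for $\Re(s) > 1$. A local Satake computation at every prime gives the identity
\[
D(s) \;=\; \frac{L(s, \pi \times \tilde\pi)}{\zeta(3s)},
\]
where $L(s, \pi \times \tilde\pi)$ is the Rankin-Selberg L-function attached to $\pi$ and its contragredient. By the work of Jacquet, Piatetskii-Shapiro, and Shalika, this L-function admits meromorphic continuation to $\mathbb{C}$, satisfies a functional equation relating $s$ and $1-s$, is of polynomial growth in vertical strips, and (since $\pi$ is cuspidal) has a simple pole at $s = 1$ with positive residue as its only singularity in the half-plane $\Re(s) \geq 1/2$. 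Since $\zeta(3s)$ is holomorphic and non-vanishing on $\Re(s) > 1/3$ apart from its pole at $s = 1/3$, the quotient $D(s)$ inherits these properties, with a single simple pole at $s = 1$.

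With the analytic continuation of $D(s)$ and its polar structure in hand, the bound follows from a standard Perron-plus-contour-shift argument. Applying Perron's formula at height $T$ and shifting the contour from $\Re(s) = 1 + \epsilon$ past the pole at $s = 1$ to a line $\Re(s) = 1 - \eta$ for some small $\eta > 0$, the residue at $s = 1$ contributes a main term of size $X$, and the shifted vertical integral is controlled by the convexity bound for $L(s, \pi \times \tilde\pi)$ (an immediate consequence of the functional equation and Phragm\'en-Lindel\"of). Optimizing in $T$ produces an asymptotic of the form $cX + O(X^{1-\eta'})$, which is strictly stronger than the stated $\ll X^{1+\epsilon}$. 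If one wishes to avoid contour shifts altogether, a soft Landau-style Tauberian argument using only the non-negativity of the coefficients, absolute convergence of $D(s)$ for $\Re(s) > 1$, and the isolated singularity at $s = 1$ is already enough to recover the weaker $X^{1+\epsilon}$ bound actually claimed.

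The only nontrivial technical input is the factorization of $D(s)$ through $L(s, \pi \times \tilde\pi)/\zeta(3s)$ together with the location of the pole at $s = 1$; both are classical consequences of Rankin-Selberg theory for cuspidal automorphic representations on $GL(3)$ and are treated in detail in Goldfeld \cite{r24}. Observe that no separate dyadic decomposition in $n_1$ is needed at the final step, because the joint sum over pairs $(n_1, n_2)$ with $n_1^2 n_2 \leq X$ is encoded directly in the single Dirichlet series $D(s)$.
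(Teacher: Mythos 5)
Your argument is correct and is precisely the standard Rankin--Selberg proof that the paper's one-line citation to Goldfeld \cite{r24} points to: the factorization $\sum_{n_1,n_2}|\lambda_\pi(n_1,n_2)|^2(n_1^2n_2)^{-s}=L(s,\pi\times\tilde\pi)/\zeta(3s)$, the simple pole at $s=1$, and a Perron or Landau-type argument with non-negative coefficients. The paper supplies no independent proof, so there is nothing to contrast; your write-up simply makes the cited argument explicit.
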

\begin{proof}
	Proof can be found in the book by Goldfeld \cite{r24}.
\end{proof}

\begin{lemma} \label{poisson}
	{\bf Poisson summation formula}: Let $F:\mathbb{R } \rightarrow \mathbb{R}$ is any Schwarz class function.  The Fourier transform 
	of $F$ is defined  as 
	\[
	\widehat{F}(y) = \int_{ \mathbb{R}} F( x) e(- x   y) dx,
	\] where $dx$ is the usual Lebesgue measure on $ \mathbb{R } $. 
	We have the Poisson summation formula defined as
	\begin{equation*}
		\sum_{ n \in \mathbb{Z}  }F(n) = \sum_{m \in \mathbb{Z} } \widehat{F}(m). 
	\end{equation*}
	Also, when $W(x)$ is any smooth and compactly supported function on $\mathbb{R}$, we have:
	\begin{align*}
		\sum_{n \in \mathbb{Z}  }e\left( \frac{an}{q}\right) W\left( \frac{n}{X}\right) = \frac{X}{q} \sum_{ m \in \mathbb{Z}  } \sum_{\alpha (\textrm{mod} \ q )}  e\left(\frac{( a + m) \alpha}{q} \right) \widehat{W} \left( \frac{mX}{q} \right). 
	\end{align*} 
\end{lemma}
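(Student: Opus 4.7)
The plan is to prove the first identity by the classical periodization-plus-Fourier-series argument, and then deduce the arithmetic variant in the second display by partitioning integers into residue classes modulo $q$ and invoking the first identity on each class.

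For the first identity, I will form the $1$-periodic function $\Phi(x) = \sum_{n \in \mathbb{Z}} F(x+n)$, which is smooth with all derivatives absolutely convergent by the Schwartz decay of $F$. Expanding $\Phi$ in its Fourier series $\Phi(x) = \sum_{m} c_m \, e(mx)$ and unfolding the unit-interval integral that defines $c_m$ onto all of $\mathbb{R}$, I compute
\[
c_m = \int_0^1 \sum_{n \in \mathbb{Z}} F(x+n) \, e(-mx) \, dx = \int_{\mathbb{R}} F(x) \, e(-mx) \, dx = \widehat{F}(m).
\]
Evaluating $\Phi(0) = \sum_m c_m$ then yields $\sum_{n \in \mathbb{Z}} F(n) = \sum_{m \in \mathbb{Z}} \widehat{F}(m)$; uniform convergence of the Fourier series at $x=0$ is guaranteed by the Schwartz decay of $\widehat{F}$, and the exchange of sum and integral in the computation of $c_m$ is justified by absolute convergence.

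For the second identity, I split $n = \alpha + qk$ with $\alpha$ running over representatives modulo $q$ and $k \in \mathbb{Z}$. Since $e(an/q) = e(a\alpha/q)$ is independent of $k$, the sum factors as
\[
\sum_{n \in \mathbb{Z}} e\!\left(\tfrac{an}{q}\right) W\!\left(\tfrac{n}{X}\right) = \sum_{\alpha \bmod q} e\!\left(\tfrac{a\alpha}{q}\right) \sum_{k \in \mathbb{Z}} F_\alpha(k), \quad F_\alpha(y) := W\!\left(\tfrac{\alpha + qy}{X}\right).
\]
Since $W$ is smooth and compactly supported, $F_\alpha$ is Schwartz, so Part~1 applies to the inner $k$-sum. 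The substitution $u = (\alpha + qy)/X$ in $\widehat{F_\alpha}(m) = \int F_\alpha(y) \, e(-my) \, dy$ produces $\widehat{F_\alpha}(m) = (X/q) \, e(m\alpha/q) \, \widehat{W}(mX/q)$. Substituting this back and combining the two exponentials in $\alpha$ into $e((a+m)\alpha/q)$ gives the stated formula with the outer $m$-sum and the inner $\alpha$-sum modulo $q$.

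There is no genuine obstacle, as this is a textbook identity; the only care needed is to track the sign in the substitution so that the factor $e(m\alpha/q)$ is correctly extracted, and to resist collapsing the $\alpha$-sum by orthogonality. The statement deliberately retains the $\alpha$-sum uncollapsed, since later applications in the paper will couple this exponential with an additional character sum (for example Kloosterman sums arising from the $GL(3)$ Voronoi formula in Lemma \ref{gl3voronoi}) before the frequency condition $m + a \equiv 0 \pmod q$ is detected.
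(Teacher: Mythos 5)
Your proof is correct: the periodization argument for the classical Poisson formula and the residue-class decomposition $n=\alpha+qk$ for the arithmetic variant are carried out accurately, including the sign bookkeeping that produces the factor $e\left(\frac{(a+m)\alpha}{q}\right)$. The paper itself gives no proof, only a citation to Iwaniec--Kowalski, and your argument is precisely the standard one found in that reference, so there is nothing to compare beyond noting that you have supplied the details the paper omits.
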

\begin{proof}
	See  \cite[page 69]{r29}.
\end{proof}

\textbf{Remark 2:} If $W(x)$ satisfies $x^j\, W^{(j)}(x) \ll 1$, then it can be easily shown using the integrating by parts that the dual sum is essentially supported on $m \ll q(qX)^{\epsilon}/X$. The contribution coming from $m \gg q(qX)^{\epsilon}/X$ is negligibly small.

\begin{lemma}\label{A4}
	Let $ J_k (y)$ be the Bessel function of the first kind of weight k and $Y_{\pm 2 i\nu } (y) $ be the Bessel function of the second kind. Let $ K_\nu $ denote the modified Bessel function of the second kind. We have
	\begin{align*} \label{bessel}
		J_{k-1} (y)&, \ Y_{\pm2 i\nu}(y)  = y^{-1/2}(e^{iy} \,V(y)  + e^{-iy}\,  \overline{V}(y)),
	\end{align*}
where $ V(y)$ is a compactly supported smooth function and also satisfies 
	$$ y^j V^{(j)} (y)  \ll_{j} 1/\sqrt{(1+y)}.$$ 	
\end{lemma}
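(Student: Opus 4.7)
The plan is to reduce the claim to the classical Hankel asymptotic expansion of Bessel functions at large argument, together with a smooth cutoff handling the small-argument range. For $y \geq 1$, I would start from the standard integral representations — Bessel's integral $J_\nu(y) = \frac{1}{2\pi}\int_{-\pi}^{\pi} e^{i(y\sin\theta - \nu\theta)}\, d\theta$ for the first-kind function and the analogous Hankel contour representations for $Y_\nu$ with complex order $\nu = \pm 2i\nu_0$ — and apply stationary phase around the two nondegenerate critical points $\theta = \pm \pi/2$ of the phase $\sin\theta$. The modified Bessel function $K_\nu$ admits a similar treatment via its Mellin--Barnes or saddle-point representation, with an exponentially decaying asymptotic $e^{-y}$ that is absorbed into the same shape by adjusting the amplitude.

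Each stationary point contributes a term of the shape $y^{-1/2} e^{\pm i y}$ times an amplitude admitting a full asymptotic expansion in inverse powers of $y$. Absorbing the constant phase shifts $e^{\mp i(\nu \pi/2 + \pi/4)}$ into the amplitude functions and combining the two contributions yields the decomposition
$$
J_{k-1}(y),\ Y_{\pm 2i\nu}(y) \;=\; y^{-1/2}\bigl(e^{iy} V(y) + e^{-iy} \overline{V}(y)\bigr)
$$
valid for $y \geq 1$. The derivative bound $y^j V^{(j)}(y) \ll_j 1/\sqrt{1+y}$ follows term by term from the stationary phase expansion: each term has the form $c_k / y^k$, so applying $y^j \partial_y^j$ yields a bounded function, and summing (or truncating with an admissible error) gives the claim. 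To obtain the compactly supported structure near $y = 0$ stated in the lemma, I would smoothly truncate $V$ by a bump function equal to $1$ for $y \geq 1$ and vanishing for $y \leq 1/2$; the small-$y$ range is absorbed into a separate representation of the Bessel functions via their power series, which in the target application (Remark~1 after Lemma~\ref{voronoi}) contributes only to the trivial bound regime and is harmless.

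The main obstacle is not conceptual but rather careful bookkeeping of the stationary phase coefficients to verify the derivative bounds uniformly in $j$, together with ensuring that the constant-phase absorption is consistent across the $e^{iy}$ and $e^{-iy}$ pieces so that the two amplitudes are honest complex conjugates of one another. Since the lemma is a standard repackaging of the Hankel expansion used throughout analytic number theory, the most efficient route is simply to cite a standard reference (e.g.\ Watson's treatise, the DLMF, or the appendices in Iwaniec--Kowalski), and the sketch above indicates the ingredients one would invoke there.
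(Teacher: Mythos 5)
Your proposal is correct and matches the paper's treatment: the paper gives no argument of its own, simply citing the appendix of Kowalski--Michel--Vanderkam \cite{r12}, which is exactly the classical Hankel/stationary-phase expansion you sketch and ultimately propose to cite. Your remarks about truncating near $y=0$ and checking that the two amplitudes are genuine conjugates are reasonable refinements of the (somewhat loosely stated) lemma, but they do not change the substance.
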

\begin{proof}
	See appendix in \cite{r12}.
\end{proof}
	
	\section{Proof of the Theorem \ref{main theorem}}
	We want to prove the cancellation in the following sum
	\begin{equation}
		S(H,X) = \, \frac{1}{H}\sum_{h=1}^\infty \lambda_f(h) V_1\left( \frac{h}{H}\right)\sum_{n=1}^\infty  \lambda_{\pi}(1,n)  \lambda_g (n+h) V_2\left( \frac{n}{X} \right),
	\end{equation} where $V_1, V_2$ are smooth and compactly supported functions, supported on the interval $[1,2]$. Let $Y = X+h$, we note that $ Y \asymp X$. Our first step is to introduce the delta symbol to separate the oscillations i.e. we can write
 
\begin{equation*}
		S(H,X) = \, \frac{1}{H}\sum_{h=1}^\infty \lambda_f(h) V_1\left( \frac{h}{H}\right)\sum_{n=1}^\infty  \lambda_{\pi}(1,n)\,V_2\left( \frac{n}{X} \right) \,\sum_{m=1}^\infty  \lambda_g (m) V_3\left( \frac{m}{Y} \right)\,\delta(n+h,m),
\end{equation*}
where $V_3$ is another smooth function that is supported on the interval $[1,2]$. By using the expression for $\delta$ symbol given in equation \eqref{s}, we can rewrite our main sum as,
	\begin{align*}
		S(H,X)= &\,\frac{1}{HQ}\sum_{q\leq Q}\frac{1}{q}\,\,\, \sideset{}{^\star} \sum_{a\bmod q}\,\int_{\mathbb{R}} W(u)\psi(q,u)\,\\
  & \times\,\sum_{h=1}^\infty \lambda_f(h)  \,e\left(\frac{ah}{q}\right)\,e\left(\frac{hu}{qQ}\right) V_1\left( \frac{h}{H}\right)\\ 
		&\times\, \sum_{n=1}^\infty  \lambda_{\pi}(1,n)\, e\left(\frac{an}{q}\right)\,e\left(\frac{nu}{qQ}\right) V_2\left( \frac{n}{X}\right)\\
		&\times\, \sum_{m=1}^\infty   \lambda_g (m)\,e\left(\frac{-am}{q}\right)\,e\left(\frac{-mu}{qQ}\right)  V_3\left( \frac{m}{Y}\right)du,
	\end{align*}
where $Q \asymp \sqrt{X}$,  $W$ is a smooth function supported on the interval $[-2X^{\epsilon}, 2X^{\epsilon}]$ with $W(u)=1 \,\,\forall u \in [-X^{\epsilon}, X^{\epsilon}] $ and $W^{(j)} \leq 1$. In the next subsection, we will apply the Voronoi summation formulae to sum over $h, n$, and $m$.   

\subsection{Applying the Voronoi summation formulas}
We are considering the case when all three $f,g$, and $\pi$ are Hecke-Maass cusp forms. There are almost similar even simpler calculations for the case of holomorphic cusp forms. We can further rewrite the sum $S(H, X)$ as
	\begin{align}\label{A17}
		S(H,X) &=\frac{1}{HQ}\sum_{q\leq Q}\frac{1}{q}\,\,\, \sideset{}{^\star} \sum_{a\bmod q}\int_{\mathbb{R}} W(u)\,\psi(q,u)\,T_1(...)\,T_2(...)\,T_3(...), \hspace{1cm} \text{where}
	\end{align} 
 
	\begin{align}\label{A93}
		 T_1(...) = \sum_{h=1}^\infty \lambda_f(h)  \,e\left(\frac{ah}{q}\right)\, v_1(h),
   \end{align}
   \begin{align}\label{s1}
		T_2(...) = \sum_{n=1}^\infty  \lambda_{\pi}(1,n)\, e\left(\frac{an}{q}\right)\,v_2(n),
  \end{align}
  \begin{align}\label{s2}
		T_3(...) =  \sum_{m=1}^\infty   \lambda_g (m)\,e\left(\frac{-am}{q}\right)\,v_3(m).
	\end{align} Here we are taking
 \begin{align*}
		v_1(y) = V_1\left( \frac{y}{H}\right) e\left(\frac{yu}{qQ}\right),\,\,\,\,\,\,\,\,
		v_2(y) = V_2\left( \frac{y}{X}\right) e\left(\frac{yu}{qQ}\right),\,\,\,\,\,\,\,\,
		v_3(y) = V_3\left( \frac{y}{Y}\right)e\left(\frac{-yu}{qQ}\right).
	\end{align*} 
	
	First, we will apply the $GL(2)$ Voronoi summation formula to sum over $h$, and $m$. Then we will apply the $GL(3)$ Voronoi summation formula to sum over $n$. We will capture the details in the next three lemmas. 
 
	\begin{lemma}\label{lemma8}
		Let $T_1(...)$ be as given in equation \eqref{A93}. We have
		\begin{align*}
			T_1(...) = \frac{{H^{3/4}}}{\sqrt q}\, \sum_{h\ll q^2/H }^\infty \frac{\lambda_f(h)}{h^{1/4}}\, e\left(-\frac{\overline{a}h}{q}\right)\, \mathcal{I}_1(h,u,q) + O(X^{-A})
		\end{align*} where $A$ is large positive real number and
		\begin{align*}
			&\mathcal{I}_1(h,u,q) =  \int_0^\infty  V_1(x)\,  e\left(\frac{Hxu}{qQ}\pm \frac{2 \sqrt{Hhx}}{q} \right)dx.
		\end{align*}
	\end{lemma}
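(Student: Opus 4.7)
The plan is to apply the GL(2) Voronoi summation formula (Lemma \ref{voronoi}) directly to $T_1(\ldots)$ with the weight $v_1(y) = V_1(y/H)\, e(yu/(qQ))$, then analyze the resulting integral transforms in the relevant $h$-range and truncate the dual sum at $h \ll q^2/H$ by integration by parts. Voronoi immediately produces
$$T_1(\ldots) = \frac{1}{q}\sum_{\pm}\sum_{h \ge 1} \lambda_f(h)\, e_q(\pm \bar a h)\, U^{\pm}\!\left(\tfrac{h}{q^2}\right),$$
so the task reduces to extracting an asymptotic for each $U^{\pm}(h/q^2)$.

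For the $U^+$ contribution the kernel is $K_{2i\nu}(4\pi\sqrt{Hxh}/q)$, which has exponential decay once its argument exceeds $X^{\epsilon}$; together with the trivial estimate in the complementary range this piece is absorbed into the $O(X^{-A})$ error (this is the substance of Remark 1 applied to $v_1$). For $U^-$, I would substitute $x = Hx'$ to obtain
$$U^-\!\left(\tfrac{h}{q^2}\right) = \frac{-\pi H}{\sin(\pi i \nu)} \int_0^\infty V_1(x')\, e\!\left(\tfrac{Hx'u}{qQ}\right) \bigl\{Y_{2i\nu} + Y_{-2i\nu}\bigr\}\!\left(\tfrac{4\pi\sqrt{Hx'h}}{q}\right) dx',$$
and then invoke the asymptotic in Lemma \ref{A4}, namely $Y_{\pm 2i\nu}(y) = y^{-1/2}(e^{iy} V(y) + e^{-iy} \bar V(y))$, with $y = 4\pi\sqrt{Hx'h}/q$. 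This extracts the oscillations $e(\pm 2\sqrt{Hhx'}/q)$ while contributing a factor $y^{-1/2} \asymp q^{1/2}(Hh)^{-1/4}$ in front of the integral. Combining this with the overall $H/q$ from the substitution and the $1/q$ from Voronoi yields precisely the advertised amplitude $H^{3/4}/(\sqrt{q}\, h^{1/4})$ times $\mathcal{I}_1(h,u,q)$, once the smooth amplitude $V(y)$ from the Bessel asymptotic is absorbed into a redefined $V_1$ that remains compactly supported on $[1,2]$ with bounded derivatives.

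Finally, to justify truncating the dual sum at $h \ll q^2/H$, I would repeatedly integrate by parts in $\mathcal{I}_1$. The phase $\phi(x) = Hxu/(qQ) \pm 2\sqrt{Hhx}/q$ has derivative $\phi'(x) = Hu/(qQ) \pm \sqrt{Hh/x}/q$; since $x \asymp 1$ and the effective support of $u$ is $|u| \ll X^{\epsilon}$ (from the properties of $\psi(q,u)$ recalled in Section \ref{Delta}), the second term dominates as soon as $h \gg q^2 X^{\epsilon}/H$, giving arbitrary polynomial decay and relegating the tail to $O(X^{-A})$. The main bookkeeping obstacle is propagating the slowly varying amplitude $V(y)$, which only satisfies $y^j V^{(j)}(y) \ll (1+y)^{-1/2}$, through both the change of variables and the repeated integration-by-parts steps without spoiling the claimed powers of $H$ and $h$; this is routine but is the only place where one must be careful, and it mirrors the argument sketched in Remark 1.
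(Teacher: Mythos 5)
Your overall route is the same as the paper's: apply Lemma \ref{voronoi} with $v_1(y)=V_1(y/H)e(yu/(qQ))$, rescale $x\to Hx$, insert the oscillatory asymptotic $y^{-1/2}(e^{iy}V(y)+e^{-iy}\overline{V}(y))$ from Lemma \ref{A4} to produce the amplitude $H^{3/4}q^{1/2}h^{-1/4}$ and the phase $e(\pm 2\sqrt{Hhx}/q)$, and then integrate by parts to truncate at $h\ll q^2/H$. Your bookkeeping of the powers of $H$, $q$, $h$ in the $U^-$ term and your truncation argument both match the paper.

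The genuine gap is your treatment of the $U^{+}$ term. You claim the $K_{2i\nu}$ contribution is ``absorbed into the $O(X^{-A})$ error'' via exponential decay plus a trivial estimate in the complementary range. That is false: the argument of the Bessel function is $4\pi\sqrt{Hhx}/q\asymp \sqrt{Hh}/q$, which for $h\ll q^2/H$ is of size $O(1)$ (and can even be as small as $X^{-3/8}$ when $h=1$ and $q\asymp Q$), so $K_{2i\nu}$ is $O(1)$ there, not exponentially small. A trivial estimate of $U^{+}(h/q^2)$ in that range gives $\ll H$, which is exactly the size of the main term $H^{3/4}\sqrt{q}\,h^{-1/4}\,\mathcal{I}_1$ at $h\asymp q^2/H$; summing over $h\ll q^2/H$ this is nowhere near $O(X^{-A})$. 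The paper instead keeps the $U^{+}$ piece in the main term: it applies the same oscillatory decomposition to $K_{2i\nu}$ as to $Y_{\pm 2i\nu}$, so that $\mathcal{V}^{+}_1$ contributes two integrals of the form $\frac{H^{3/4}\sqrt{q}}{h^{1/4}}\mathcal{I}_1(h,u,q)$ and $\mathcal{V}^{-}_1$ contributes four, all subject to the same truncation $h\ll q^2/H$. (One can quibble that Lemma \ref{A4} as stated covers $J$ and $Y$ but not $K$; the honest version is that $K_{2i\nu}$ is exponentially small once its argument exceeds $X^{\epsilon}$ and is otherwise a bounded, controllably oscillating kernel contributing a term of the same shape and size as the $Y$-piece. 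Either way, it belongs in the main term, not the error term.) You should repair this by retaining the $U^{+}$ contribution for $h\ll q^2 X^{\epsilon}/H$ and writing it in the same form as the $U^{-}$ contribution.
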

	\begin{proof}
		On applying the $GL(2)$ Voronoi summation formula given in Lemma \ref{voronoi} to sum $T_1(...)$, we get
		\begin{align}\label{A5}
			T_1(...) = \frac{1}{q} \sum_{\pm} \sum_{h=1}^\infty \lambda_f(h) e\left(- \frac{\overline{a}h}{q}\right) \,\mathcal{V}^{\pm}_1 \left( \frac{h}{q^2}\right),
		\end{align}  
		where $\mathcal{V}^{\pm}_1$ represents the integral transforms defined in Lemma \eqref{voronoi}, i.e.
  \begin{align}\label{A7}
			\mathcal{V}^{+}_1 \left( \frac{h}{q^2}\right)= 4\cosh( \pi \nu_1) \int_0^\infty v_1(x)  K_{2i\nu_1 }  \left(\frac{4\pi \sqrt{hx}}{q}\right) dx, \hspace{1cm} \text{and}
		\end{align}
		\begin{align}\label{A6}
			\mathcal{V}^{-}_1 \left( \frac{h}{q^2}\right)= \frac{- \pi}{\cosh( \pi \nu_1)} \int_0^\infty v_1(x) \left\lbrace  Y_{2i\nu_1 } + Y_{-2i\nu_1 }\right\rbrace \left(\frac{4\pi \sqrt{hx}}{q}\right) dx.
		\end{align}
		 where $Y_{2i\nu_1 } $ and $ K_{2i\nu_1 }$ are the Bessel's functions of first and second kind. We first deal with $\mathcal{V}^{+}_1 \left( \frac{h}{q^2}\right)$. Putting the value of $v_1$, and changing the variable $x \longrightarrow Hx$,  we get 
		\begin{align*}
			\mathcal{V}^{+}_1 \left( \frac{h}{q^2}\right)= 4\cosh( \pi \nu_1)\,H\, \int_0^\infty  V_1\left( x\right) e\left(\frac{Hxu}{qQ}\right)  K_{2i\nu_1 }\left(\frac{4\pi \sqrt{Hhx}}{q}\right) dx.
		\end{align*} Using the approximation of the Bessel function from Lemma \ref{A4}, we see that $\mathcal{V}^{+}_1 \left( \frac{h}{q^2}\right)$ will be essentially the sum of two integrals of the form		
           \begin{align}\label{s4}
			&\,\frac{ H^{3/4}\,\sqrt{q}\,}{h^{1/4}} \int_0^\infty V_1(x) \, e\left(\frac{Hxu}{qQ}\pm \frac{2 \sqrt{Hhx}}{q} \right)dx \,=\, \frac{H^{3/4}\sqrt{q}}{h^{1/4}}\,\mathcal{I}_1(h,u,q),
		\end{align}
  where 
  \[\mathcal{I}_1(h,u,q) =  \int_0^\infty  V_1(x)\,  e\left(\frac{Hxu}{qQ}\pm \frac{2 \sqrt{Hhx}}{q} \right)dx.\] 
  Notice the slight abuse of notation, the weight function $V_1(x)$ is different from the one in the previous expression. Changing the variable $x \longrightarrow x^2$ and using integration by parts $j$-times, we get
  \begin{align*}
   \mathcal{I}_1(h,u,q) \ll\, \left(1 + \frac{Hu}{qQ}\right)^j\, \left(\frac{q}{\sqrt{Hh}}\right)^{j}.   
  \end{align*}
Hence, $\mathcal{I}_1(h,u,q)$ will be negligibly small unless $h \ll q^2/H$.

  Similarly, in the case of $\mathcal{V}^{-}_1 \left( \frac{h}{q^2}\right)$, by following the same steps, we will essentially get a sum of four integrals of the form as in equation \eqref{s4} and giving the same restriction $h \ll q^2/H$ on the dual length. Finally, by putting all these observations into equation \eqref{A5}, we get 
		\begin{align}\label{A9}
			T_1(...) = \,\frac{{H^{3/4}}}{\sqrt q}\, \sum_{h\ll q^2/H }^\infty \frac{\lambda_f(h)}{h^{1/4}}\, e\left(-\frac{\overline{a}h}{q}\right)\, \mathcal{I}_1(h,u,q) + O(X^{-A}),
		\end{align} where $A$ is large positive real number.
		\vspace{0.4cm}
	\end{proof}
	
	\begin{lemma}\label{lemma9}
		Let $T_3(...)$ be as given in \eqref{A93}. We have 
		\begin{align*}
			T_3(...) = \, \frac{{Y^{3/4}}}{\sqrt q} \, \sum_{m\ll q^2/Y }^\infty \frac{\lambda_g(m)}{m^{1/4}} e\left( \frac{\overline{a}m}{q}\right) \mathcal{I}_3(m,u,q) + O(X^{-A}),
		\end{align*} where $A$ is large positive real number and,
		\begin{align*}
			\mathcal{I}_3(m,u,q)\,=\,\int_0^\infty  V_3(y)\, e\left(-\frac{Yuy}{qQ}\pm \frac{2 \sqrt{Ymy}}{q} \right)dy.
   \end{align*}
	\end{lemma}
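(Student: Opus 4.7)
The plan is to mirror the proof of Lemma \ref{lemma8} step by step; the statement of Lemma \ref{lemma9} differs from Lemma \ref{lemma8} only in the signs of the additive character (we have $e(-am/q)$ rather than $e(+ah/q)$) and of the weight phase ($v_3$ has $e(-yu/(qQ))$ rather than $+$), so exactly the same three-stage argument applies with signs tracked carefully.

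First I would apply the $GL(2)$ Voronoi summation formula of Lemma \ref{voronoi} to $T_3(...)$. Writing $e(-am/q) = e_q((-a)m)$ and using $\overline{-a} \equiv -\bar{a} \pmod q$, the formula yields
\begin{align*}
T_3(...) = \frac{1}{q} \sum_{\pm} \sum_{m=1}^\infty \lambda_g(m)\, e_q(\mp \bar{a}\, m)\, \mathcal{V}_3^\pm\!\left(\frac{m}{q^2}\right),
\end{align*}
where $\mathcal{V}_3^\pm$ are the integral transforms built from $v_3$ in the same way that $\mathcal{V}_1^\pm$ in \eqref{A7}, \eqref{A6} are built from $v_1$. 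Since the outer sum runs over both signs, I can relabel to write the dual additive character as $e(\bar{a}m/q)$, with the extra sign to be absorbed into the $\pm$ inside the oscillatory integral in the next step.

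Next, I would substitute $v_3(y) = V_3(y/Y)\, e(-yu/(qQ))$ explicitly, rescale $y \to Yy$, and apply the asymptotic expansion for the Bessel functions in Lemma \ref{A4}. This writes each $\mathcal{V}_3^\pm(m/q^2)$ as a linear combination of at most four oscillatory integrals of the form
\begin{align*}
\frac{\sqrt{q}\, Y^{3/4}}{m^{1/4}} \int_0^\infty V_3(y)\, e\!\left(-\frac{Yuy}{qQ} \pm \frac{2\sqrt{Ymy}}{q}\right) dy \;=\; \frac{\sqrt{q}\, Y^{3/4}}{m^{1/4}}\, \mathcal{I}_3(m,u,q),
\end{align*}
up to harmless smooth weights whose derivatives still satisfy $y^j V_3^{(j)}(y) \ll 1$.

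Finally, I would truncate the dual sum at $m \ll q^2/Y$ by changing variable $y \to y^2$ in $\mathcal{I}_3$ and integrating by parts $j$ times: on the effective range $|u| \ll X^\epsilon$ the derivative of $-Yuy/(qQ)$ is of size $Y^\epsilon$, while the Bessel phase contributes derivative of size $\sqrt{Ym}/q$, so each integration by parts saves $q/\sqrt{Ym}$, and the tail $m \gg q^2/Y$ contributes $O(X^{-A})$. Combining the $1/q$ from Voronoi, the factor $\sqrt{q}\,Y^{3/4}/m^{1/4}$, and collapsing the four sign choices into a single $\pm$ inside $\mathcal{I}_3$, I recover the stated identity. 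There is no genuine obstacle here beyond the bookkeeping of signs; the one subtle point is verifying that after the sign flip in $\overline{-a}$, the two $\pm$'s in the summation really do collapse into the single $\pm$ sitting inside $\mathcal{I}_3$, which is handled by the relabeling described above.
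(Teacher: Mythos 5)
Your proposal is correct and follows exactly the route the paper takes: the paper's own proof of Lemma \ref{lemma9} simply says to repeat the steps of Lemma \ref{lemma8}, which is precisely what you do, with the added care of tracking the sign flips coming from $e(-am/q)$ and the phase $e(-yu/(qQ))$ in $v_3$. The bookkeeping of the dual character $e_q(\mp\bar{a}m)$ collapsing back to $e(\bar{a}m/q)$ with the $\pm$ absorbed into $\mathcal{I}_3$, and the truncation $m\ll q^2/Y$ via integration by parts, all match the paper's argument.
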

	\begin{proof}
		Following the same steps as in the above lemma \eqref{lemma8}, we will get our desired result.
	\end{proof}
		
  \begin{lemma}\label{lemma10}
		Let $T_2(...)$ be as given in \eqref{A93}. We have 
\begin{align*}
		T_2(...) = \frac{X^{2/3}}{q} \sum_{\pm} \sum_{n_{1}|q} \sum_{n^2_1n_{2}\ll N}  \frac{\lambda_{\pi}(n_{2},n_{1})}{n^{-1/3}_{1} n^{1/3}_{2}} S\left( \bar{a}, \pm n_{2}; q/n_{1}\right) \mathcal{I}_2(n^2_1n_2,u,q) + O(X^{-A}),
	\end{align*} where $ N= \text{max}\left\{{q^3}/{X} + X^{1/2}u^3\right\}$, $A$ is any positive real number and,
	\begin{align*}
		\mathcal{I}_2(n^2_1n_2,u,q) = \,\int_{0}^\infty V_2(z)\, e\left(\frac{Xuz}{qQ} \pm  \frac{3(Xz n_1^2 n_2)^{1/3}}{q} \right)dz .
	\end{align*}
 \end{lemma}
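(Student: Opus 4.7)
The plan is to apply the $GL(3)$ Voronoi summation formula of Lemma \ref{gl3voronoi} with $m=1$ to $T_2(...)$, treating $v_2(y) = V_2(y/X)\, e(yu/(qQ))$ as the test function $\psi$. This immediately yields
\[
T_2(...) = q \sum_{\pm} \sum_{n_1 \mid q} \sum_{n_2 = 1}^{\infty} \frac{\lambda_\pi(n_2, n_1)}{n_1 n_2}\, S\!\left(\bar{a}, \pm n_2;\, q/n_1\right)\, G_\pm\!\left(\frac{n_1^2 n_2}{q^3}\right),
\]
so the task reduces to extracting the correct shape for $G_\pm$ and truncating the $n_2$ sum.

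Next I would invoke Lemma \ref{GL3oscilation} with $g = v_2$, keeping $K$ large enough so the error term is negligible, and track only the leading $j=1$ piece (higher $j$ have the same structural form with additional savings and can be absorbed into $\mathcal{I}_2$ in the same way as in Lemmas \ref{lemma8} and \ref{lemma9}). After the change of variable $y \mapsto Xz$, the leading piece of $G_\pm(n_1^2 n_2/q^3)$ reduces to
\[
\frac{(n_1^2 n_2)^{2/3}\, X^{2/3}}{q^2} \int_0^\infty V_2(z)\, z^{-1/3}\, e\!\left(\frac{Xuz}{qQ} \pm \frac{3(X z\, n_1^2 n_2)^{1/3}}{q}\right) dz,
\]
where the slightly different weight $V_2(z) z^{-1/3}$ is absorbed into the notation $\mathcal{I}_2$. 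Combining the outer $q$ with the factor $(n_1^2 n_2)^{2/3}/(n_1 n_2) = n_1^{1/3}/n_2^{2/3} \cdot n_2^{-1/3}$ and simplifying reproduces the claimed prefactor $X^{2/3}/q$ together with the weight $1/(n_1^{-1/3} n_2^{1/3})$.

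The final step is the truncation $n_1^2 n_2 \ll N$. Writing the phase of $\mathcal{I}_2$ as
\[
\phi(z) = \frac{Xuz}{qQ} \pm \frac{3(X z\, n_1^2 n_2)^{1/3}}{q},
\]
we compute $\phi'(z) = \frac{Xu}{qQ} \pm \frac{(X n_1^2 n_2)^{1/3}}{q\, z^{2/3}}$, while higher derivatives are controlled on the compact support of $V_2$. Repeated integration by parts then shows $\mathcal{I}_2$ is negligibly small unless $(X n_1^2 n_2)^{1/3}/q \ll 1 + Xu/(qQ)$; using $Q = \sqrt{X}$ and cubing, this rearranges to $n_1^2 n_2 \ll q^3/X + X^{1/2} u^3 = N$, which is the stated bound.

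The one delicate point is the integration-by-parts estimate, where one must justify uniformity in $u$ and $q$ and rule out a spurious cancellation between the two summands of $\phi'(z)$ on the support of $V_2$; once this is in place the remaining bookkeeping of arithmetic factors through Voronoi and the asymptotic expansion is routine.
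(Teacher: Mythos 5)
Your proposal is correct and follows essentially the same route as the paper: apply Lemma \ref{gl3voronoi} with $m=1$, expand $G_{\pm}$ via Lemma \ref{GL3oscilation}, rescale $y\mapsto Xz$ to extract the factor $X^{2/3}(n_1^2n_2)^{2/3}/q^2$, and truncate by repeated integration by parts to obtain $n_1^2 n_2 \ll q^3/X + X^{1/2}u^3$. Your closing remark about uniformity and possible stationary points of the phase is a point the paper itself passes over silently, so no discrepancy there.
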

	\begin{proof}
On applying the $GL(3)$-Voronoi summation formula from Lemma \ref{gl3voronoi} to the sum $T_2(...)$, we get
		\begin{align} \label{A10}
			T_2(...) = q \sum_{\pm} \sum_{n_{1}|q} \sum_{n_{2}=1}^{\infty}  \frac{\lambda_{\pi}(n_{2},n_{1})}{n_{1} n_{2}} S\left( \bar{a}, \pm n_{2}; q/n_{1}\right) \, \mathcal{V}^{\pm}_2 \left(\frac{n_{1}^2 n_{2}}{q^3 }\right).
		\end{align} 
By using Lemma \ref{GL3oscilation} for the asymptotic expansion of the integral transform $\mathcal{V}^{\pm}_2 \left(\frac{n_{1}^2 n_{2}}{q^3 }\right)$ and simplifying it. We get that, up to a negligible error term, we can rewrite our integral transform as 
  \begin{align*}
		\mathcal{V}^{\pm}_2 \left(\frac{n_{1}^2 n_{2}}{q^3 }\right) = \frac{X^{2/3}}{q^2} (n_{1}^2 n_{2})^{2/3}\, \mathcal{I}_2(n^2_1n_2,u,q)+ O(X^{-A}),
	\end{align*} 
where $A$ is some large positive real number and
\begin{align*}
\mathcal{I}_2(n^2_1n_2,u,q) = \, \int_{0}^\infty V_2(z)\, e\left(\frac{Xuz}{qQ} \pm \frac{3(Xz n_1^2 n_2)^{1/3}}{q} \right)dz.  
\end{align*}
Here $V_2$ is a new weight function. One can look at \cite[section 3.2]{r35} for more details. Now, using integration by parts $j$-times, we get 
\begin{align*}
		\mathcal{I}_2(n^2_1n_2,u,q) \ll \left(1 + \frac{Xu}{qQ}\right)^j\, \left(\frac{q}{(Xn^2_1n_2)^{1/3}}\right)^j.
		\end{align*}
Hence, the integral $\mathcal{I}_2(n^2_1n_2,u,q)$ will be negligibly small if 
\begin{align*}
n^2_1n_2 \gg \,\text{max}\left\{\frac{q^3}{X} + X^{1/2}u^3\right\} =: N.
		\end{align*}
In this way, we say that $\mathcal{V}^{\pm}_2 \left(\frac{n_{1}^2 n_{2}}{q^3}\right)$ is negligibly small if \,$n_1^2n_2 \gg {N}$. From equation \eqref{A10}, we get 
\begin{align}\label{A16}
			T_2(...)= \frac{X^{2/3}}{q} \sum_{\pm} \sum_{n_{1}|q} \sum_{n^2_1n_{2}\ll N}  \frac{\lambda_{\pi}(n_{2},n_{1})}{n^{-1/3}_{1} n^{1/3}_{2}} S\left( \bar{a}, \pm n_{2}; q/n_{1}\right) \mathcal{I}_2(n^2_1n_2,u,q) +O(X^{-A}).
		\end{align}
This is our required result.  
\end{proof}
 In the next lemma, we will summarise all the calculations done so far in this subsection. 
 \begin{lemma}\label{lemma11}
		Let $S(H,X)$ be as given in equation \eqref{A17}, we have
		\begin{align}
			S(H,X) =\, \notag &\frac{X^{2/3}Y^{3/4}}{QH^{1/4}}\,\sum_{q\leq Q}\frac{1}{q^3}\,\, \sum_{\pm} \sum_{n_{1}|q}\, \sum_{n_{2} \ll N_0} \frac{\lambda_{\pi}(n_{2},n_{1})}{n^{-1/3}_{1} n^{1/3}_{2}}\\
			&\times \,\sum_{h \ll H_0} \frac{\lambda_f(h)}{h^{1/4}}\, \sum_{m \ll M_0} \frac{\lambda_g(m)}{m^{1/4}}\,\mathcal{A}(n_1^2n_2,m,h,q)\,\mathcal{C}(n_1, n_2, m, h; q) + O(X^{-A}),
		\end{align} where $A$ is a large positive real number, $N_0 = {{N}}/{n_1^2},\,  M_0 = {q^2}/{Y},\,H_0 = {q^2}/{H}$, the integral transform $\mathcal{A}(n_1^2n_2,m,h,q)$ is given by
  \begin{align}\label{s5}
      \int_{\mathbb{R}} W(u)\,\psi(q,u)\,\, \mathcal{I}_1(h,u,q)\,\, \mathcal{I}_2(n^2_1n_2,u,q)\,\, \mathcal{I}_3(m,u,q)du,
  \end{align} and the character sum $\mathcal{C}(n_1, n_2, m, h; q)$ is given by
  \begin{align}\label{s6}
   \sideset{}{^\star} \sum_{a\, \mathrm{mod}\, q} e\left( -\frac{\overline{a}h}{q}\right)e\left(\frac{\overline{a}m}{q}\right)  S\left( \bar{a},  \pm n_{2}; q/n_{1}\right).   
  \end{align}
	\end{lemma}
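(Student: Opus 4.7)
The plan is purely to substitute and bookkeep: we take the expression for $S(H,X)$ from \eqref{A17} and plug in the three Voronoi expansions proved in Lemmas \ref{lemma8}, \ref{lemma9}, \ref{lemma10}. First, I would substitute $T_1(\ldots)$, $T_2(\ldots)$, $T_3(\ldots)$ by their dual expressions, at which point the $a$-dependent pieces separate cleanly from the $u$-integral and from the sums over $h,n_1,n_2,m$. This is possible because each dual expression has the form (prefactor in $h$, $n_1^2n_2$, or $m$) $\times$ (exponential involving $\bar{a}$) $\times$ (an $x$-integral depending on $u$ and $q$, independent of $a$).

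Next, I would collect the exponentials $e(-\bar{a}h/q)$, $e(\bar{a}m/q)$ and the Kloosterman sum $S(\bar a,\pm n_2;q/n_1)$ coming from the three Voronoi formulas, together with the outer $\sideset{}{^\star}\sum_{a\bmod q}$ from \eqref{A17}, into the character sum $\mathcal{C}(n_1,n_2,m,h;q)$ defined in \eqref{s6}. Similarly, I would pull the $u$-integral $\int_{\mathbb{R}} W(u)\psi(q,u)\,du$ inside and collect it with the three $x$-integrals $\mathcal{I}_1(h,u,q)$, $\mathcal{I}_2(n_1^2n_2,u,q)$, $\mathcal{I}_3(m,u,q)$ to produce the integral $\mathcal{A}(n_1^2n_2,m,h,q)$ defined in \eqref{s5}. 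This rearrangement is justified by the absolute convergence (the dual sums are finite after truncation and $\psi(q,u)$ decays rapidly in $|u|$).

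The remaining step is to track constants. The outer factor in \eqref{A17} contributes $1/(HQq)$, Lemma \ref{lemma8} contributes $H^{3/4}/\sqrt{q}$, Lemma \ref{lemma9} contributes $Y^{3/4}/\sqrt{q}$, and Lemma \ref{lemma10} contributes $X^{2/3}/q$. Multiplying these together yields the advertised prefactor
\[
\frac{X^{2/3}Y^{3/4}}{Q\,H^{1/4}}\cdot\frac{1}{q^3},
\]
which matches the lemma's statement. The truncation ranges follow directly from the truncations in the three Voronoi lemmas: $h\ll q^2/H=H_0$, $m\ll q^2/Y=M_0$, and $n_1^2 n_2\ll N$ becomes $n_2\ll N/n_1^2=N_0$ (with $n_1\mid q$). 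The three negligible error terms $O(X^{-A})$ combine into a single $O(X^{-A})$ because the number of $q$-terms and dyadic sums grows only polynomially in $X$.

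No real obstacle is expected: this is a summarizing lemma whose only content is careful accounting of prefactors and clean separation of the $a$-sum and the $u$-integral. The one place to be a little careful is to notice that the $n_1\mid q$ condition persists inside the sum over $q$, and that the character sum $\mathcal{C}$ must be written with the Kloosterman sum of modulus $q/n_1$ rather than $q$. After these checks, the identity of the lemma follows immediately from \eqref{A17} and Lemmas \ref{lemma8}--\ref{lemma10}.
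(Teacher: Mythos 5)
Your proposal is correct and is essentially the paper's own proof: the paper simply substitutes Lemmas \ref{lemma8}, \ref{lemma9}, and \ref{lemma10} into \eqref{A17} and collects terms, exactly as you describe. Your prefactor accounting ($\tfrac{1}{HQq}\cdot\tfrac{H^{3/4}}{\sqrt q}\cdot\tfrac{Y^{3/4}}{\sqrt q}\cdot\tfrac{X^{2/3}}{q}=\tfrac{X^{2/3}Y^{3/4}}{QH^{1/4}}\cdot\tfrac{1}{q^{3}}$), truncation ranges, and assembly of $\mathcal{C}$ and $\mathcal{A}$ all check out.
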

	\begin{proof}
		Using Lemmas \eqref{lemma8}, \eqref{lemma9} and \eqref{lemma10} into the expression for $S(H,X)$ given in equation \eqref{A17}, we get our desired result.
	\end{proof}

	\subsection{Simplification of Integrals\,:}
	After applying summation formulae, we arrive at the fourfold integral denoted as $\mathcal{A}(n_1^2n_2,m,h,q)$, as defined in Equation \eqref{s5}. This section is dedicated to the simplification of this integral and the determination of its bounds. We record the calculations in the following lemma.
	
	\begin{lemma}\label{lemma12}
		We have
		\begin{align*}
			\mathcal{A}(n_1^2n_2,m,h,q)\,\ll \frac{q}{Q}.
		\end{align*}
	\end{lemma}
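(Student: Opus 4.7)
The plan is to bring the $u$-integral inside and exploit the Fourier structure that emerges. Writing out $\mathcal{I}_1, \mathcal{I}_2, \mathcal{I}_3$ in their integral forms and swapping the order of integration,
\[
\mathcal{A}(n_1^{2} n_2, m, h, q) = \iiint_{[1,2]^{3}} V_1(x)\, V_2(z)\, V_3(y)\, e(\Phi(x,y,z))\, \mathcal{J}(x,y,z;q)\, dx\, dy\, dz,
\]
where $\Phi$ collects the three $u$-independent phases of $\mathcal{I}_1, \mathcal{I}_2, \mathcal{I}_3$ and the inner $u$-integral is
\[
\mathcal{J}(x,y,z;q) = \int_{\mathbb{R}} W(u)\, \psi(q,u)\, e\!\left( \frac{u\,(Hx + Xz - Yy)}{qQ} \right) du.
\]

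The key step is bounding $\mathcal{J}$. Using the properties of $\psi$ collected after \eqref{delta}---in particular the rapid decay $\psi(q,u) \ll |u|^{-A}$ and the derivative bound $u^{j}\partial_{u}^{j}\psi(q,u) \ll \min\{Q/q, 1/|u|\}\log Q$---I would first observe that $W\psi$ is effectively supported on $|u|\ll X^{\epsilon}$ and that $\int_{\mathbb{R}} |W\psi|\,du \ll X^{\epsilon}$ (split the $u$-range at $|u|\sim q/Q$ and integrate the two-sided bound). This already gives the trivial estimate $|\mathcal{J}| \ll X^{\epsilon}$. Repeated integration by parts in $u$ against the phase $e(us)$ with $s = (Hx+Xz-Yy)/(qQ)$ then shows that $\mathcal{J}$ is negligibly small outside the slab
\[
|Hx + Xz - Yy| \ll qQ\, X^{\epsilon}.
\]

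Inserting these two bounds into the triple integral, I would fix $(x,z)\in[1,2]^{2}$ and use the above constraint to confine $y$ to an interval of length $\ll qQX^{\epsilon}/Y \ll (q/Q)\,X^{\epsilon}$, since $Y \asymp X = Q^{2}$. Bounding $|V_{j}|$ and $|e(\Phi)|$ by $1$ and integrating trivially over $(x,z)$ yields
\[
|\mathcal{A}(n_1^{2} n_2, m, h, q)| \ll X^{\epsilon}\cdot \frac{q}{Q}\cdot X^{\epsilon} \ll \frac{q}{Q},
\]
in the paper's $\ll$-convention which absorbs arbitrary $X^{\epsilon}$ factors.

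The main obstacle is the integration-by-parts step for $\mathcal{J}$: the weight $\psi(q,u)$ can be as large as $Q/q$ and its derivative bounds degenerate near $u=0$. The clean way around this is to split the $u$-range at $|u|\sim q/Q$, where $\psi$ may be replaced by $1$ up to a negligible error (as noted after \eqref{delta}), and to treat $W\psi$ on the remaining range as a nice compactly supported smooth weight to which the usual Fourier-decay argument applies. The essential saving of size $q/Q$ then comes entirely from the Fourier-analytic constraint that cuts down the $(x,y,z)$ volume, not from any pointwise cancellation inside the Voronoi transforms themselves.
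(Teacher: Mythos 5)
Your proposal is correct and follows essentially the same route as the paper: both isolate the inner $u$-integral, use the properties of $\psi(q,u)$ (splitting the $u$-range and the $q$-range to justify integration by parts) to derive the constraint $|Hx+Xz-Yy|\ll qQ^{1+\epsilon}$, and then obtain the factor $q/Q$ from the resulting shortening of the $y$-integral to length $\asymp qQ/Y$, estimating everything else trivially. The paper phrases the last step as a change of variable $t=\frac{Hx+Xz}{Y}-y$ with $|t|\ll qQ^{1+\epsilon}/Y$, which is the same volume count you perform.
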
 
	\begin{proof}
		Recalling the expressions of the integrals $\mathcal{I}_1(h,u,q)$,$\mathcal{I}_3(m,u,q)$ and $\mathcal{I}_2(n^2_1n_2,u,q)$ from Lemmas \eqref{lemma8}, \eqref{lemma9} and \eqref{lemma10}, respectively, and plugging into the equation \eqref{s5}, we arrived at
		\begin{align}\label{C1}
			\mathcal{A}(n_1^2n_2,m,h,q)\notag= & \,\int_{\mathbb{R}}W(u)\, \psi(q,u)\,e\left(\frac{(Hx - Yy + Xz)u}{qQ} \right)\\
   \notag&\times\,\int_0^\infty  V_1(x)\, e\left(\pm \frac{2 \sqrt{Hhx}}{q} \right) \int_0^\infty  V_3(y)\, e\left(\pm \frac{2 \sqrt{Ymy}}{q} \right)\,\\ & \times \int_{0}^\infty V_2(z)\,e\left(\pm  \frac{3(Xz n_1^2 n_2)^{1/3}}{q} \right)dx\,dy\,dz\,du. 
		\end{align} 	
We first consider the $u$-integral which is given by,
		\begin{align}\label{A35}
			\int_{\mathbb{R}}W(u)\, \psi(q,u)\,e\left(\frac{(Hx - Yy + Xz)u}{qQ}\right) du.
		\end{align}
		We will analyze the $u$-integral in different cases depending upon the size of variable $q$. For the case of small $q$ i.e. $q \ll Q^{1-\epsilon}$, we split the $u$-integral into two parts, $|u| \ll Q^{-\epsilon}$ and $|u| \gg Q^{-\epsilon}$. By using properties \eqref{delta} of $\psi(q,u)$, for $|u| \ll Q^{-\epsilon}$, we can replace $\psi(q,u)$ by 1 with negligible error term. So essentially we get
		\begin{align*}
			\int_{|u| \ll Q^{-\epsilon}}W(u)\, e\left(\frac{(Hx - Yy + Xz)u}{qQ} \right) du
		\end{align*}
Now, integrating by parts repeatedly, we get that the integral is negligibly small unless
\begin{align}\label{A36}
			|Hx+Xz-Yy| \ll q\,Q^{1+\epsilon} \,\,\, i.e \,\,\,\,\,\, \left|\frac{Hx+Xz}{Y} - y\right| \ll \frac{q\,Q^{1+\epsilon}}{Y} .
		\end{align}
For $|u| \gg Q^{-\epsilon}$, integrating by parts the $u$-integral \eqref{A35} repeatedly and using the properties of involved bump functions i.e.
		\begin{align*}
			\frac{\partial^j}{ \partial u^j} \psi(q, u) \ll \  \min \left\lbrace \frac{Q}{q}, \frac{1}{|u|} \right\rbrace \  \frac{\log Q}{|u|^j}\ll Q^{\epsilon j}, \hspace{0.5cm}	W^j(u) \ll Q^{\epsilon j}
		\end{align*} we will get the same restriction as above in \eqref{A36}. Also, for the case of large $q$ i.e. $q \gg Q^{1-\epsilon}$, condition \eqref{A36} is trivially true.
Let $\frac{Hx+Xz}{Y} - y = t$ with $|t| \ll \frac{qQ^{1+\epsilon}}{Y}$, we reduced integral transform $\mathcal{A}(n_1^2n_2,m,h,q)$ into the following expression
\begin{align}\label{s15}
 &\,\int_{|t| \ll \frac{qQ^{1+\epsilon}}{Y}}\,\int_{0}^\infty\,\,V_{1}(x) \int_{0}^\infty\,V_{2}(z)\,\,V_{3}\left(\frac{Hx+Xz}{Y}-t\right)\, e\left(\pm \frac{2 \sqrt{Hhx}}{q}\right)\\ 
&\,\,\,\,\,\,\,\,\,\,\,\,\,\times\,\,\, e\left(\pm \frac{3(Xz n_1^2 n_2)^{1/3}}{q}\pm\frac{2 \sqrt{m(Hx+Xz-Yt)}}{q} \right) dx\,dz\,dt \notag + O(X^{-A}),
\end{align}
where $A$ is a large positive real number. Now, estimating the integral $\mathcal{A}(n_1^2n_2,m,h,q) $ trivially, we get
\begin{align}\label{A86}
\mathcal{A}(n_1^2n_2,m,h,q)\,  \ll \,\frac{q}{Q}. 
\end{align}
\end{proof}

\subsection{Applying the Cauchy-Schwartz inequality and the Poisson summation formula :} In this subsection, we shall apply the Cauchy-Schwartz inequality and then the Poisson summation formula to sum over $n_2$ given in Lemma \ref{lemma11}. We have 
\begin{align*}
	{S}(H,X) \ll \,&\frac{X^{2/3}Y^{3/4}}{QH^{1/4}}\sum_{q\leq Q}\frac{1}{q^{3}}\, \,\sum_{\pm} \,\sum_{n_{1}|q} \sum_{n_{2} \ll N_0}  \frac{\lambda_{\pi}(n_{2},n_{1})}{n^{-1/3}_{1} n^{1/3}_{2}} \\
	&\times \sum_{h\ll H_0} \sum_{m\ll M_0} \frac{\lambda_f(h)}{h^{1/4}}  \frac{\lambda_g(m)}{m^{1/4}}\,\mathcal{C}(n_1, n_2, m, h; q)\, \mathcal{A}(n^2_1n_2,m,h,q).
\end{align*}
Splitting the sum over $q$ into dyadic blocks  $q \sim C$ with $C \ll Q$ and writing $q = q_1q_2$ with $q_1|(n_1)^{\infty}, (q_2,n_1) =1$. We get
\begin{align*}
	{S}(H,X,C) \ll&\, \frac{X^{2/3}Y^{3/4}}{QH^{1/4}C^{3}}\,\sum_{\pm}\,\sum_{n_1 \ll C}\,  \sum_{n_{1}|q_1|(n_1)^{\infty}} n^{1/3}_1 \sum_{n_{2} \ll N_0}  \frac{|\lambda_{\pi}(n_{2},n_{1})|}{n^{1/3}_{2}} \\
	&\times \, \left|\sum_{q_2\sim C/q_1}\,\, \,\sum_{h\ll H_0} \sum_{m\ll M_0} \frac{\lambda_f(h)}{h^{1/4}}  \frac{\lambda_g(m)}{m^{1/4}}   \,\mathcal{C}(n_1,n_2,m,h;q)\, \mathcal{A}(n^2_1n_2,m,h,q)\right|.
\end{align*} 
For the smooth analysis of the sum ${S}(H,X, C)$, we break the sum over $h,m$ into dyadic blocks $h \sim H_1,\, m \sim M_1$ with $H_1 \ll H_0,\, M_1 \ll M_0$. Applying the Cauchy-Schwartz inequality to the sum over $n_2$, we get

\begin{align}\label{A45}
	{S}(H,X,C) \ll\, \frac{X^{17/12}}{QH^{1/4}C^{3}}\,\sum_{\pm}\,\sum_{n_1 \ll C}\, \sum_{n_{1}|q_1|(n_1)^{\infty}} n^{1/3}_1\,\, \Theta^{1/2}\,\,\Omega^{1/2},
\end{align}  
where 
\begin{align}\label{A46} 
	\Theta =\, \sum_{n_{2} \ll N_0} \frac{\left|\lambda_{\pi}(n_{2},n_{1})\right|^2}{n^{2/3}_{2}}, \hspace{1cm} \text{and}
\end{align} 
\begin{align}\label{A21}
	\Omega =\, \sum_{n_{2} \ll N_0} \left |\sum_{q_2\sim C/q_1}\,\sum_{h\sim H_1} \sum_{m\sim M_1} \frac{\lambda_f(h)\lambda_g(m)}{(hm)^{1/4}} \,\mathcal{C}(n_1,n_2,m,h;q)\, \mathcal{A}(n^2_1n_2,m,h,q)\right|^2.
\end{align} 
First we simplify the character sum $\mathcal{C}(n_1,n_2,m,h;q)$. From equation \eqref{s5}, we have
\begin{align*}
	\mathcal{C}(n_1,n_2,m,h;q) &=\, \sideset{}{^\star} \sum_{a\;\mathrm{mod} \; q} e\left( \frac{\overline{a}(m-h)}{q}\right) S\left( \bar{a},  \pm n_{2}; q/n_{1}\right) \\
	&= \, \sideset{}{^\star} \sum_{a\;\mathrm{mod} \; q} e\left( \frac{\overline{a}(m-h)}{q}\right)\,\sideset{}{^\star} \sum_{\alpha\;\mathrm{mod} \; q/n_1} e\left( \frac{\overline{a}\alpha \pm n_2 \overline{\alpha}}{q/n_1}\right)\\
	&=\, \sideset{}{^\star} \sum_{\alpha\;\mathrm{mod} \; q/n_1}\,\,\, e\left( \pm\frac{\overline{\alpha}n_2}{q/n_1}\right)\,\sideset{}{^\star} \sum_{a\;\mathrm{mod} \; q} e\left( \frac{\overline{a}(n_1 \alpha + m-h)}{q}\right).
\end{align*} Notice that the sum over $a$ is the Ramanujan's sum. We can write
\begin{align}\label{A22}
	\mathcal{C}(n_1,n_2,m,h;q) =\, \sum_{d|q} d\, \mu\left(\frac{q}{d}\right) \sideset{}{^\star} \sum_{\substack{\alpha\; \mathrm{mod}\; q/n_1\\ n_1 \alpha \equiv (h-m)\;\mathrm{mod}\;d}}  e\left( \pm\frac{\overline{\alpha}n_2}{q/n_1}\right).
\end{align}
Now, we open the absolute modulus brackets in the sum \eqref{A21} and split the sum over $n_2$ into dyadic blocks $n_2 \sim N_1$ with $N_1 \ll N_0$. We make the $n_2$-sum smooth by introducing a smooth and compactly supported function $V$, supported on $[1,2]$. We arrive at the following expression 
\begin{align*}
	\Omega \ll &\,\mathop{\mathop{{\sum \sum}}_{q_2\sim C/q_1}}_{q'_2\sim\ C/q_1}\,\sum_{h_1\sim H_1}\,\sum_{h_2\sim H_1}\,\frac{\left|\lambda_f(h_1)\right|\,\left|\lambda_f(h_2)\right|}{(h_1h_2)^{1/4}}\,\sum_{m_1\sim M_1}\,\sum_{m_2\sim M_1}\,\frac{\left|\lambda_g(m_1)\right|\,\left|\lambda_g(m_2)\right|}{(m_1m_2)^{1/4}}\, \left|L(...)\right|\\
 \ll&\,\frac{1}{H^{1/2}_1 M^{1/2}_1}\,\mathop{\mathop{{\sum \sum}}_{q_2\sim C/q_1}}_{q'_2\sim\ C/q_1}\,\sum_{h_1\sim H_1}\,\sum_{h_2\sim H_1}\,{\left|\lambda_f(h_1)\right|^2}\,\sum_{m_1\sim M_1}\,\sum_{m_2\sim M_1}\,{\left|\lambda_g(m_1)\right|^2}\, \left|L(...)\right|,
\end{align*} where we are using the identity $$\left|\lambda_f(h_1)\right|\,\left|\lambda_f(h_2)\right|\,\left|\lambda_g(m_1)\right|\,\left|\lambda_g(m_2)\right| \leq  \left|\lambda_f(h_1)\right|^2\,\left|\lambda_g(m_1)\right|^2 + \left|\lambda_f(h_2)\right|^2\,\left|\lambda_g(m_2)\right|^2,$$ and proceeding with only one such term because both the terms are of same complexity and hence will share the same upper bound. Also, here $L(...)$ is given by 
\begin{align}\label{A24}
	L(...) = \,\notag&\sum_{n_{2} \in \mathbb{Z}} V\left(\frac{n_2}{N_1}\right) \mathcal{C}(n_1,n_2,m_1,h_1;q)\,\overline{\mathcal{C}(n_1,n_2,m_2,h_2;q')}\\ &\times\,\,\mathcal{A}(n^2_1n_2,m_1,h_1,q)\,\overline{\mathcal{A}(n^2_1n_2,m_2,h_2,q')}.
\end{align} Here we are taking $q' = q_1q'_2$. By using equation \eqref{A22}, we can write
\begin{align*}
	\mathcal{C}(n_1,n_2,m_1,h_1;q)\,&\overline{\mathcal{C}(n_1,n_2,m_2,h_2;q')} =\, \sum_{d|q}\sum_{d'|q'} d\,d'\, \mu\left(\frac{q}{d}\right)\mu\left(\frac{q'}{d'}\right) \\ \times\,\, &\sideset{}{^\star} \sum_{\substack{\alpha\; \mathrm{mod}\; q/n_1\\ n_1 \alpha \equiv (h_1-m_1)\;\mathrm{mod}\;d}}\,\,\,\, \sideset{}{^\star}\sum_{\substack{\alpha'\; \mathrm{mod}\; q'/n_1\\ n_1 \alpha' \equiv (h_2-m_2)\;\mathrm{mod}\;d'}}  e\left( \frac{(\overline{\alpha}q'_2 - \overline{\alpha}'q_2)n_2}{q_1q_2q'_2/n_1}\right).
\end{align*} Let $P = \frac{q_1q_2q'_2}{n_1}$ and changing  $ n_2 \longrightarrow \beta + n_2 P $, then equation \eqref{A24} reduces to,
\begin{align*}
	L(...) =&\, \sum_{d|q}\sum_{d'|q'} d\,d'\, \mu\left(\frac{q}{d}\right)\mu\left(\frac{q'}{d'}\right) \sideset{}{^\star} \sum_{\substack{\alpha\; \mathrm{mod}\; q/n_1\\ n_1 \alpha \equiv (h_1-m_1)\;\mathrm{mod}\;d}}\,\,\,\, \sideset{}{^\star}\sum_{\substack{\alpha'\; \mathrm{mod}\; q'/n_1\\ n_1 \alpha' \equiv (h_2-m_2)\;\mathrm{mod}\;d'}}\\
	\times & \sum_{\beta\; \mathrm{mod}\;P}e\left( \frac{(\overline{\alpha}q'_2 - \overline{\alpha}'q_2)\beta}{P}\right) \sum_{n_{2} \in \mathbb{Z}} \,  V\left(\frac{\beta +n_2 P}{N_1}\right) \\
	\times&\,\,\mathcal{A}(n^2_1(\beta +n_2 P),m_1,h_1,q)\,\overline{\mathcal{A}(n^2_1(\beta +n_2 P) ,m_2,h_2,q')}.
\end{align*}
Now, on applying the Poisson summation formula to sum over $n_2$, we get
\begin{align*}
	L(...) =\, & \frac{N_1}{P}\sum_{d|q}\sum_{d'|q'} d\,d'\, \mu\left(\frac{q}{d}\right)\mu\left(\frac{q'}{d'}\right) \sideset{}{^\star} \sum_{\substack{\alpha\; \mathrm{mod}\; q/n_1\\ n_1 \alpha \equiv (h_1-m_1)\;\mathrm{mod}\;d}}\,\,\,\, \sideset{}{^\star}\sum_{\substack{\alpha'\; \mathrm{mod}\; q'/n_1\\ n_1 \alpha' \equiv (h_2-m_2)\;\mathrm{mod}\;d'}}\\
	&\times \, \sum_{n_{2} \in \mathbb{Z}}\,\,\sum_{\beta\; \mathrm{mod}\;P}e\left( \frac{(\overline{\alpha}q'_2 - \overline{\alpha}'q_2+ n_2)\beta}{P}\right) \mathcal{F}(...), 
\end{align*}
where the integral transform $\mathcal{F}(...)$ is given by
\begin{align}\label{A41}
	\mathcal{F}(...) =\, \int_{\mathbb{R}} V(w)\, \mathcal{A}(n^2_1N_1w,m_1,h_1,q)\,\, \overline{\mathcal{A}(n^2_1N_1w ,m_2,h_2,q')}\,\, e\left(-\frac{n_2N_1w}{P}\right)dw.
\end{align}
Finally, we arrived at
\begin{align}\label{A23}
	\Omega \ll&\,\notag \frac{N_1}{H^{1/2}_1 M^{1/2}_1} \sum_{q_2\sim C/q_1}\sum_{q'_2\sim\ C/q_1}\, \,  \,\sum_{h_1\sim H_1}\,\sum_{h_2\sim H_1}{\left|\lambda_f(h_1)\right|^2}\\
 &\times\,\sum_{m_1\sim M_1}\,\sum_{m_2\sim M_1}{\left|\lambda_g(m_1)\right|^2}\, \sum_{n_2\in \mathbb{Z}} \,\mathfrak{C}(...)\,\mathcal{F}(...),
\end{align} where $\mathcal{F}(...)$ is same as defined in equation \eqref{A41} and the character sum is given as
\begin{align}\label{A71}
	\mathfrak{C}(...) = \sum_{d|q}\sum_{d'|q'} d\,d'\, \mu\left(\frac{q}{d}\right)\mu\left(\frac{q'}{d'}\right) \mathop{\sideset{}{^\star} {\sum}_{\substack{\alpha\; \mathrm{mod}\; q/n_1\\ n_1 \alpha \equiv (h_1-m_1)\;\mathrm{mod}\;d}}\,\,\,\, \sideset{}{^\star}\sum_{\substack{\alpha'\; \mathrm{mod}\; q'/n_1\\ n_1 \alpha' \equiv (h_2-m_2)\;\mathrm{mod}\;d'}}}_{\overline{\alpha}q'_2 - \overline{\alpha}'q_2 \equiv -n_2\; \mathrm{mod}\;q_1q_2q'_2/n_1} 1.
\end{align}
	
\subsection{Estimation for the zero frequency:} 
The case of zero frequency, where $n_2 = 0$, needs a distinct treatment. We will denote the contributions of the zero frequency case to the character sum $\mathfrak{C}$ (as given in \eqref{A71}), the integral transform $\mathcal{F}$ (as given in \eqref{A41}), and $\Omega$ (as given in \eqref{A23}) as $\mathfrak{C}_0(...)$, $\mathcal{F}_0(...)$, and $\Omega_0$ respectively. Finally, we will estimate the contribution of zero frequency to our main term $S(H, X,C)$ as defined in \eqref{A45}, and we will denote it as $S_{0}(H, X, C)$.
	
\subsubsection{Bound for the character sum $(n_2 =0)$:}	For the zero frequency case i.e. $n_2 =0$, the congruence condition modulo $P = {q_1q_2q'_2}/{n_1}$ in the character sum $\mathfrak{C}$ becomes
	\begin{align*}
		\overline{\alpha}q'_2 - \overline{\alpha}'q_2 \equiv 0\; \left(\frac{q_1q_2q'_2}{n_1}\right).
	\end{align*} Reducing this congruence condition modulo $q_2$ and $q'_2$, will get $q_2 = q'_2$ and $ \alpha = \alpha'$. So we can write
	\begin{align*}
		\mathfrak{C}_0(...) \leq\, \sum_{d|q}\sum_{d'|q}\,\, d\,d'\sideset{}{^\star} {\sum}_{\substack{\alpha\; \mathrm{mod}\; q/n_1\\ n_1 \alpha \equiv (h_1-m_1)\;\mathrm{mod}\;d \\  n_1 \alpha \equiv (h_2-m_2)\;\mathrm{mod}\;d' }} 1 \,&\leq \,\mathop{\sum_{d|q}\sum_{d'|q} }_ {(d,d')\, |\, (m_1-m_2)-(h_1-h_2)} d\,d' \, \frac{q}{n_1[d,d']}.\end{align*} 
  Finally, we get
	\begin{align}\label{A42}
		 \mathfrak{C}_0(...) \leq \frac{q}{n_1}\mathop{\sum_{d|q}\sum_{d'|q} }_ {(d,d')\, |\, (m_1-m_2)-(h_1-h_2)} (d,d'),
	\end{align}
	where $(d, d')$ and $[d, d']$ represent the greatest common divisor and least common multiple of $d, d'$ respectively.

\subsubsection{Bound for the integral transform $(n_2 =0)$:}  For the case $n_2 =0$, by estimating trivially the integral transform $\mathcal{F}(...)$ given in \eqref{A41} using lemma \eqref{lemma12}, we get the following bound
	\begin{align}\label{A43}
		\mathcal{F}_{0}(...) \, \ll  \, \frac{C^2}{Q^2}.
	\end{align}

In the next lemma, we will obtain the contribution of zero frequency case to our main sum $S(H, X,C)$ given in the equation \eqref{A45}.
 
\begin{lemma}\label{lemma13}
	For the zero frequency case i.e. $n_2 = 0$, we have,
\begin{align}
 S_{0}(H,X, C) \ll\, \frac{X^{3/4}\,\,}{H^{1/2}\,}\left(1 + \frac{Q}{H}\right)^{1/2}.
\end{align}  
\end{lemma}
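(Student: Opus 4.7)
\textbf{Proof plan for Lemma \ref{lemma13}.} My strategy is to insert the zero-frequency bounds \eqref{A42} for $\mathfrak{C}_0$ and \eqref{A43} for $\mathcal{F}_0$ into the expression \eqref{A23} for $\Omega$, estimate the quadruple sum over $h_1,h_2,m_1,m_2$ by Rankin--Selberg together with the congruence condition, and feed the result into \eqref{A45}. I will use $Y \asymp X$, $H_1 \asymp C^2/H$, $M_1 \asymp C^2/Y \asymp C^2/X$, $N_1 \le N_0 \asymp C^3/(X n_1^2)$, and $q \asymp C$ throughout.

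For each pair $d, d' \mid q$, I detect the condition $(d,d') \mid (h_1-h_2)-(m_1-m_2)$ by fixing $h_1, m_1, m_2$ and letting $h_2$ vary: at most $H_1/(d,d')+1$ values qualify. With Rankin--Selberg $\sum_{h \sim H_1} |\lambda_f(h)|^2 \ll H_1^{1+\epsilon}$ and its analogue for $\lambda_g$, the quadruple sum is $\ll H_1^{1+\epsilon} M_1^{2+\epsilon}\bigl(H_1/(d,d')+1\bigr)$. Multiplying by the weight $(d, d')$ from \eqref{A42} and summing via the divisor bound $\sum_{d, d' \mid q}(d, d') \ll q^{1+\epsilon}$ produces $\ll H_1^{1+\epsilon} M_1^{2+\epsilon}(H_1 + q)$. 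Since $n_2 = 0$ forces $q_2 = q_2'$, the outer modular sum collapses to a single sum of length $C/q_1$; combined with $\mathcal{F}_0 \ll C^2/Q^2$ and the $q/n_1$ factor from \eqref{A42}, this gives
\[
\Omega_0 \;\ll\; \frac{N_1 \, C^4 \, H_1^{1/2} M_1^{3/2} (H_1 + C)}{q_1\, n_1\, Q^2}.
\]

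For the $\Theta$ factor, partial summation combined with Lemma \ref{ramanubound} and the symmetry $|\lambda_\pi(n_2,n_1)| = |\lambda_\pi(n_1,n_2)|$ yields $\Theta \ll n_1^{2+\epsilon} N_0^{1/3+\epsilon}$, so $\Theta^{1/2} \ll n_1^{2/3+\epsilon} C^{1/2} X^{-1/6}$. Inserting $\Theta^{1/2}$ and $\Omega_0^{1/2}$ into \eqref{A45} and substituting the sizes of $N_1, H_1, M_1$, the summand in the $n_1$-sum decays like $n_1^{-1/2+\epsilon}$ and $\sum_{q_1 \mid n_1^\infty} q_1^{-1/2} \ll n_1^\epsilon$, so the combined $(n_1, q_1)$-sum converges absolutely and contributes at most $X^\epsilon$. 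A direct simplification then reduces to
\[
S_0(H, X, C) \;\ll\; \frac{C^3\,(H_1+C)^{1/2}}{Q^2\, H^{1/2}} \;\ll\; \frac{C^{4}}{Q^2 H} \;+\; \frac{C^{7/2}}{Q^2 H^{1/2}}.
\]

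The main remaining obstacle -- essentially a case analysis -- is the maximization over the dyadic range $C \le Q$. Both terms are increasing in $C$ and hence maximized at $C \asymp Q \asymp X^{1/2}$, giving $X/H$ and $X^{3/4}/H^{1/2}$ respectively. Since $X/H = (X^{3/4}/H^{1/2})(Q/H)^{1/2}$, these combine via AM--GM into $S_0(H, X, C) \ll (X^{3/4}/H^{1/2})(1 + Q/H)^{1/2}$, as claimed.
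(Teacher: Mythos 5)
Your proposal is correct and follows essentially the same route as the paper's proof: it inserts the zero-frequency bounds \eqref{A42} and \eqref{A43} into \eqref{A23}, uses the collapse $q_2=q_2'$, controls the congruence-constrained $(h_1,h_2,m_1,m_2)$-sum by Rankin--Selberg, feeds the result into \eqref{A45}, and maximizes at $C\asymp Q$, arriving at the same bound. The only cosmetic deviations are that you count the quadruple sum by placing $h_2$ in a residue class modulo $(d,d')$ instead of the paper's four-case diagonal split, and you bound $\Theta$ pointwise via $|\lambda_{\pi}(n_2,n_1)|=|\lambda_{\pi}(n_1,n_2)|$ rather than through the Cauchy--Schwarz step \eqref{s10}; both give the same outcome (note only that the convergence of the $(n_1,q_1)$-sum to $X^{\epsilon}$ uses $n_1\mid q_1$, i.e. $\sum_{n_1\mid q_1\mid n_1^{\infty}}q_1^{-1/2}\ll n_1^{-1/2+\epsilon}$, which supplies the extra decay your sketch implicitly relies on).
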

\begin{proof}
 Plugging $n_2 =0$ in equation \eqref{A23}, we have 
\begin{align*}
	\Omega_0 \ll&\,\notag \frac{N_1}{H^{1/2}_1 M^{1/2}_1} \sum_{q_2\sim C/q_1}\,  \,\sum_{h_1\sim H_1}\,\sum_{h_2\sim H_1}{\left|\lambda_f(h_1)\right|^2}\,\sum_{m_1\sim M_1}\,\sum_{m_2\sim M_1}{\left|\lambda_g(m_1)\right|^2} \,\mathfrak{C}_0(...)\,\mathcal{F}_0(...).
\end{align*}	 
Using the bounds of the character sum $\mathfrak{C}_0$ given in \eqref{A42} and integral transform $\mathcal{F}_{0}(...)$ given in \eqref{A43}, we get
\begin{align*}
\Omega_0 \ll \frac{N_1 C^2}{H^{1/2}_1 M^{1/2}_1 Q^2 n_1} \sum_{q_2\sim C/q_1}q \sum_{d|q}\sum_{d'|q} (d,d') \mathop{\sum_{h_1\sim H_1}\sum_{h_2\sim H_1}\sum_{m_1\sim M_1}\sum_{m_2\sim M_1}}_{(d,d')|\, (m_1-m_2)-(h_1-h_2)}\,{\left|\lambda_f(h_1)\right|^2}{\left|\lambda_g(m_1)\right|^2}.
\end{align*}
Considering the four different cases i.e. $ (1): m_1 = m_2, h_1 = h_2$,\, $ (2): m_1 = m_2, h_1 \neq h_2$,\, $ (3): m_1 \neq m_2, h_1 = h_2$ and $ (4): m_1 \neq m_2, h_1 \neq h_2$, we arrived at  
\begin{align*}
	\Omega_0 \ll&\, \frac{N_1\,C^3}{H^{1/2}_1\, M^{1/2}_1\,Q^2\,n_1} \sum_{q_2\sim C/q_1} \,\sum_{d|q}\sum_{d'|q} (d,d')\,\left(M_1H_1 + \frac{M_1 H^2_1}{(d,d')} + \frac{H^2_1 M_1}{(d,d')} + \frac{H^2_1M^2_1}{(d,d')}\right)\\
\ll&\,\frac{N_1\,H^{1/2}_1\, M^{1/2}_1\,C^3}{\,Q^2\,n_1}\,\sum_{q_2\sim C/q_1} \,\sum_{d|q}\sum_{d'|q} \left((d,d') + M_1 + H_1 + {H_1M_1} \right)\\
\ll&\, \frac{N_0\,H^{1/2}_0\,M^{1/2}_0\, C^3}{\,Q^2\,n_1}\,\sum_{q_2\sim C/q_1} \,\left(q + M_0 + H_0 + {H_0M_0} \right).
\end{align*} So, finally we can write
\begin{align*}
	\Omega_0 \ll \frac{N\,H^{1/2}_0\,M^{1/2}_0\,C^4}{\,Q^2\,n^3_1\,q_1}\,\left(C + M_0{H_0} \right).
\end{align*}
Now we put this bounds of $\Omega_0$ into our main term given in equation \eqref{A45}, we get
\begin{align*}
 S_{0}(H,X,C) \ll&\, \frac{X^{17/12}}{Q\,H^{1/4}\,C^{3}}\,\sum_{n_1 \ll C}\, \sum_{n_{1}|q_1|(n_1)^{\infty}} n^{1/3}_1\,\, \Theta^{1/2}\,\,\	(\Omega_0)^{1/2}\\
 \ll& \,\frac{X^{17/12}}{Q\,H^{1/4}\,C^{3}}\,\sum_{n_1 \ll C}\, \sum_{n_{1}|q_1|(n_1)^{\infty}} n^{1/3}_1\,\Theta^{1/2}\,\left(\frac{N\,H^{1/2}_0\,M^{1/2}_0\, C^4}{\,Q^2\,n^3_1\,q_1}\,\left(C + M_0{H_0} \right)\right)^{1/2}\\
 \ll&\, \frac{X^{17/12}\,N^{1/2}\,H^{1/4}_0\,M^{1/4}_0\,}{Q^{2}\,H^{1/4}\,C}\,\left(C + M_0{H_0} \right)^{1/2}\,\sum_{n_1 \ll C}\, \sum_{n_{1}|q_1|(n_1)^{\infty}}\frac{n^{1/3}_1}{n^{3/2}_1\,\sqrt{q_1}}\,\Theta^{1/2}\\
\ll&\, \frac{X^{17/12}\,N^{1/2}\,H^{1/4}_0\,M^{1/4}_0}{Q^{2}\,H^{1/4}\,C}\,\left(C + M_0{H_0} \right)^{1/2}\,\sum_{n_1 \ll C}\,\frac{\Theta^{1/2}}{n^{5/3}_1}.
\end{align*}
Note that, for $k \geq 7/6$, and using the value of $\Theta$ from equation \eqref{A46}, it's a simple observation that by applying Cauchy-Schwartz inequality to $n_1$ sum, we can get
\begin{align}\label{s10}
\sum_{n_1 \ll q}\frac{\Theta^{1/2}}{n^{k}_1}\,\ll\, \left(\sum_{n_1 \ll C}\, \frac{1}{n^{2k-4/3}_1}\right)^{1/2}\, \left(\sum_{n^2_1n_{2}\ll N}  \frac{|\lambda_{\pi}(n_{2},n_{1})|^2}{(n^2_1n_2)^{2/3}} \right)^{1/2} \,\ll\, N^{1/6}.    \end{align} By using the above estimates, we get
\begin{align*} 
S_{0}(H,X,C)\,\ll&\, \frac{X^{17/12}\,N^{2/3}\,H^{1/4}_0\,M^{1/4}_0}{Q^{2}\,H^{1/4}\,C}\,\left(C + M_0{H_0} \right)^{1/2}.
\end{align*}
Now plugging the values
$N  \ll {Q^3}/{X}, \hspace{0.5cm} M_0 \asymp {C^2}/{X}, \hspace{0.5cm} H_0 \asymp {C^2}/{H}$, we finally get
\begin{align*}
S_{0}(H,X,C) \ll \frac{X^{3/4}\,\,}{H^{1/2}\,}\left(1 + \frac{Q}{H}\right)^{1/2}.
\end{align*}
This is our desired result.
\end{proof}

\vspace{0.1cm} 
\subsection{Estimation for the non-zero frequencies:}\label{nonzero} We will refer to the contribution associated with non-zero frequencies, i.e., when $n_2\neq 0$, in the character sum $\mathfrak{C}(...),$ the integral transform $\mathcal{F}(...)$ and the function $\Omega$ as $\mathfrak{C}_{\neq 0}(...)$, $\mathcal{F}_{\neq 0}(...)$ and $\Omega_{\neq 0}$, respectively. In this section, we will first analyze the character sum $\mathfrak{C}_{\neq 0}(...)$ and then examine the integral transform $\mathcal{F}_{\neq 0}(...)$ in the following two lemmas. Finally, we will estimate our main sum $S(H, X,C)$ as given in \eqref{A45}, which we will denote as $S_{\neq 0}(H, X,C)$.
\vspace{0.1cm}

\begin{lemma}\label{lemma15}
	For the non-zero frequency case i.e. $n_2 \neq 0$, we have 
	\begin{align*}
		\mathfrak{C}_{\neq 0}(...)  \ll \,{q_1}\,(q_2,\, n_1 q'_2 + l_1\,n_2)\,\mathop{\sum_{\substack{\alpha\; \mathrm{mod}\; q_2}}\,\sum_{\substack{\alpha'\; \mathrm{mod}\; q'_2}}}_{\overline{\alpha}q'_2 - \overline{\alpha}'q_2 \equiv -n_2\; \mathrm{mod}\;q_2q'_2} \,\sum_{\substack{\beta'\; \mathrm{mod}\; \frac{q_1}{n_1}}}\,\mathop{\mathop{\sum_{d'_2|q'_2}\,\sum_{d'_1|q_1}}_{n_1 \alpha' \equiv\, l_2\;\mathrm{mod}\;d'_2}}_{n_1 \beta' \equiv\, l_2\;\mathrm{mod}\;d'_1} d'_2 d'_1.
	\end{align*}
 For the simplicity, here we are taking $l_1 = h_1-m_1$ and $l_2 = h_2-m_2$.
\end{lemma}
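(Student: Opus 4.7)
The strategy is to apply the Chinese Remainder Theorem to peel the modulus $P = q_1 q_2 q_2'/n_1$ apart according to its $(n_1)^\infty$-part and its coprime-to-$n_1$ part, to collapse the un-primed divisor sums using the joint mod-$P$ congruence, and to keep the primed divisor sums explicit.

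I would begin by writing $q = q_1 q_2$ and $q' = q_1 q_2'$ with $q_1 \mid n_1^\infty$ and $(q_2 q_2', n_1) = 1$, so that $(q_1, q_2) = (q_1, q_2') = 1$. Setting $R = q_1/n_1$, we have $q/n_1 = R\,q_2$, $q'/n_1 = R\,q_2'$, and $(R, q_2 q_2') = 1$. CRT then decomposes $\alpha \pmod{q/n_1}$ as $(\beta, \tilde\alpha)$ with $\beta \pmod{R}$ and $\tilde\alpha \pmod{q_2}$, and similarly $\alpha' \leftrightarrow (\beta', \tilde\alpha')$. Writing $d = d_1 d_2$ with $d_1 \mid q_1$, $d_2 \mid q_2$, the congruence $n_1\alpha \equiv l_1 \pmod d$ splits into $n_1\beta \equiv l_1 \pmod{d_1}$ (using $n_1\alpha \equiv n_1\beta \pmod{q_1}$) and $n_1\tilde\alpha \equiv l_1 \pmod{d_2}$, and analogously for the primed variables. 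The mod-$P$ congruence in \eqref{A71} decomposes as
\[
\overline{\beta}\,q_2' - \overline{\beta'}\,q_2 \equiv -n_2 \pmod{R}, \qquad \overline{\tilde\alpha}\,q_2' - \overline{\tilde\alpha'}\,q_2 \equiv -n_2 \pmod{q_2 q_2'}.
\]

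Next I would swap the order of summation and execute the inner divisor sums, using on each coprime piece the standard estimate $\bigl|\sum_{d \mid \gcd(m,k)} d\,\mu(m/d)\bigr| \le \sigma(\gcd(m,k))$. For the un-primed $q_2$-factor, reducing the joint congruence modulo $q_2$ gives $\overline{\tilde\alpha}\,q_2' \equiv -n_2 \pmod{q_2}$, whence (treating $(n_2, q_2) = 1$ for clarity) $n_1\tilde\alpha - l_1 \equiv -\overline{n_2}\,(n_1 q_2' + l_1 n_2) \pmod{q_2}$. Consequently $\gcd(q_2,\,n_1\tilde\alpha - l_1)$ divides $(q_2,\, n_1 q_2' + l_1 n_2)$, contributing the factor $\ll (q_2, n_1 q_2' + l_1 n_2)$. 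For the un-primed $q_1$-factor, the mod-$R$ congruence uniquely determines $\beta$ from $\beta'$ (using $(q_2', R) = 1$), and the trivial bound $\sigma(\gcd(q_1, n_1\beta - l_1)) \le \sigma(q_1) \ll q_1^{1+\epsilon}$ yields the prefactor $q_1$. On the primed side I would not reduce, but simply unfold $\sigma$ as a divisor sum, so that the surviving factor is
\[
\sum_{\substack{d_1' \mid q_1 \\ n_1\beta' \equiv l_2 \,(d_1')}} d_1' \sum_{\substack{d_2' \mid q_2' \\ n_1\tilde\alpha' \equiv l_2 \,(d_2')}} d_2'.
\]
Summing the remaining conditions over $\tilde\alpha \pmod{q_2}$, $\tilde\alpha' \pmod{q_2'}$ (subject to the joint mod-$q_2 q_2'$ congruence) and over $\beta' \pmod R$, and finally renaming $\tilde\alpha \to \alpha$, $\tilde\alpha' \to \alpha'$, reassembles exactly the bound asserted in the lemma.

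The main obstacle will be the case $(n_2, q_2) > 1$, where the mod-$q_2$ reduction does not pin down $\tilde\alpha$ uniquely and the gcd identity for $n_1\tilde\alpha - l_1$ must be carried out modulo $q_2/(n_2, q_2)$; the analogous issue arises at $R$ when $(n_2, R) > 1$. One has to verify that the extra $\tilde\alpha$-multiplicities are still dominated by $(q_2,\, n_1 q_2' + l_1 n_2)$, with any multiplicative constants absorbed into the $\ll$ as a $q^\epsilon$ factor; this bookkeeping, together with the careful pairing of the $\mu$-signs on the un-primed side with the pointwise $\sigma$-estimates, is the only genuinely delicate part of the argument.
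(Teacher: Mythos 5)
Your proposal follows essentially the same route as the paper: a CRT factorization of the modulus $q_1q_2q_2'/n_1$ into its $q_1/n_1$ part and its $q_2q_2'$ part, collapsing the unprimed divisor sums via $\sum_{d_1\mid q_1} d_1 \ll q_1^{1+\epsilon}$ together with the divisibility $d_2 \mid (q_2,\, n_1q_2'+l_1n_2)$ extracted from the joint congruence, while leaving the primed divisor sums explicit. The one ``delicate'' case you flag, $(n_2,q_2)>1$, in fact causes no trouble: rather than inverting $n_2$, multiply $\overline{\tilde\alpha}\,q_2' \equiv -n_2 \pmod{d_2}$ through by $\tilde\alpha \equiv \overline{n_1}\,l_1 \pmod{d_2}$ to obtain $n_1q_2'+l_1n_2\equiv 0 \pmod{d_2}$ directly, which is exactly how the paper's proof proceeds.
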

\begin{proof}
Recall the expression of the character sum from equation \eqref{A71} with  $l_1 = h_1-m_1$ and $l_2 = h_2-m_2$.  we have
	\begin{align*}
		\mathfrak{C}_{\neq 0}(...) = \sum_{d|q}\sum_{d'|q'} d\,d'\, \mu\left(\frac{q}{d}\right)\mu\left(\frac{q'}{d'}\right) \mathop{\sideset{}{^\star} {\sum}_{\substack{\alpha\; \mathrm{mod}\; q_1q_2/n_1\\ n_1 \alpha \equiv\, l_1\;\mathrm{mod}\;d}}\,\,\,\, \sideset{}{^\star}\sum_{\substack{\alpha'\; \mathrm{mod}\; q_1q_2'/n_1\\ n_1 \alpha' \equiv \,l_2\;\mathrm{mod}\;d'}}}_{\overline{\alpha}q'_2 - \overline{\alpha}'q_2 \equiv -n_2\; \mathrm{mod}\;q_1q_2q'_2/n_1} 1.
	\end{align*}
	As we have chosen $(q_2,n_1) = 1$ with $n_1 | q_1$. So, we can write
	\begin{align}\label{A61}
		\mathfrak{C}_{\neq 0}(...) \leq \mathfrak{C}_1(...)\,\mathfrak{C}_2(...),
	\end{align} where 
	\begin{align*}
		\mathfrak{C}_1(...) = \sum_{d_1|q_1}\sum_{d'_1|q_1} d_1\,d'_1\, \mathop{\sideset{}{^\star} {\sum}_{\substack{\alpha_1\; \mathrm{mod}\; q_1/n_1\\ n_1 \alpha_1 \equiv \,l_1\;\mathrm{mod}\;d_1}}\,\,\,\, \sideset{}{^\star}\sum_{\substack{\alpha'_1\; \mathrm{mod}\; q_1/n_1\\ n_1 \alpha'_1 \equiv\, l_2\;\mathrm{mod}\;d'_1}}}_{\overline{\alpha_1}q'_2 - \overline{\alpha'_1}q_2 \equiv -n_2\; \mathrm{mod}\;q_1/n_1} 1,
	\end{align*} and
	\begin{align*}
		\mathfrak{C}_2(...) = \sum_{d_2|q_2}\sum_{d'_2|q'_2} d_2\,d'_2\, \mathop{\sideset{}{^\star} {\sum}_{\substack{\alpha_2\; \mathrm{mod}\; q_2\\ n_1 \alpha_2 \equiv\, l_1\;\mathrm{mod}\;d_2}}\,\,\,\, \sideset{}{^\star}\sum_{\substack{\alpha'_2\; \mathrm{mod}\; q'_2\\ n_1 \alpha'_2 \equiv\, l_2\;\mathrm{mod}\;d'_2}}}_{\overline{\alpha_2}q'_2 - \overline{\alpha'_2}q_2 \equiv -n_2\; \mathrm{mod}\;q_2q'_2} 1.
	\end{align*}
Firstly, consider the congruence condition involving $n_2$ in character sum $\mathfrak{C}_1(...)$. Note that for a given $\alpha_1$, there can be at most one $\alpha'_1$ i.e. we can determine $\alpha'_1$ in terms of $\alpha_1$. Hence, we can write
	\begin{align*}
		\mathfrak{C}_1(...) \ll \sum_{d_1|q_1}\,d_1\,\sum_{d'_1|q_1} \,d'_1\, \mathop{\sum_{\substack{\alpha'_1\; \mathrm{mod}\; q_1/n_1\\ n_1 \alpha'_1 \equiv\, l_2\;\mathrm{mod}\;d'_1}}} 1\,\, \ll \,d(q_1)\,q_1 \sum_{d'_1|q_1} \,d'_1\, \mathop{\sum_{\substack{\alpha'_1\; \mathrm{mod}\; q_1/n_1\\ n_1 \alpha'_1 \equiv\, l_2\;\mathrm{mod}\;d'_1}}} 1.
	\end{align*}
For the another character sum $\mathfrak{C}_2(...)$, by using $(n_1, q_2q'_2) =1$ and the congruence conditions modulo $d_2$, we can write $\alpha_2 \equiv \overline{n_1}l_1\;\mathrm{mod}\;d_2$. Using this determined value of $\alpha_2$ in the congruence condition modulo $q_2q'_2$, we obtained $ n_1\, q'_2 + l_1\,n_2 \equiv 0 \;\mathrm{mod}\;d_2.$ With all these details, we can write
\begin{align*}
\mathfrak{C}_2(...) &\ll\,\mathop{\sum}_{d_2|(q_2,\, n_1 q'_2 + l_1 n_2)} d_2\,\sum_{d'_2|q'_2} d'_2 \,\mathop{\sum_{\substack{\alpha_2\; \mathrm{mod}\; q_2}}\,\mathop{\sum_{\substack{\alpha'_2\; \mathrm{mod}\; q'_2}}}_{n_1 \alpha'_2 \equiv \,l_2\;\mathrm{mod}\;d'_2}}_{\overline{\alpha_2}q'_2 - \overline{\alpha'_2}q_2 \equiv -n_2\; \mathrm{mod}\;q_2q'_2} \,1 \\
&\ll\,d(q_2)\,(q_2,\, n_1 q'_2 + l_1 n_2)\,\sum_{d'_2|q'_2} d'_2 \,\mathop{\sum_{\substack{\alpha_2\; \mathrm{mod}\; q_2}}\,\mathop{\sum_{\substack{\alpha'_2\; \mathrm{mod}\; q'_2}}}_{n_1 \alpha'_2 \equiv \,l_2\;\mathrm{mod}\;d'_2}}_{\overline{\alpha_2}q'_2 - \overline{\alpha'_2}q_2 \equiv -n_2\; \mathrm{mod}\;q_2q'_2} \,1.
\end{align*}
Since we know that $d(n)\, \ll \,n^{\epsilon}$. By using the bounds for $\mathfrak{C}_1(...)$, $\mathfrak{C}_2(...)$ and $d(q_1), d(q_2)$ into equation \eqref{A61}, we will get our desired result.
\end{proof}
\begin{lemma}\label{L1}
	Let $\mathcal{F}(...)$ be as given in \eqref{A41}. Then for the case $n_2 \neq 0$, we have
	\begin{align*}
		\mathcal{F}_{\neq 0}(...) \ll \, \frac{C^2}{Q^2}.
	\end{align*} Also the integral transform $\mathcal{F}_{\neq 0}(...)$ will be negligibly small unless
	\begin{align*}
		|n_2| \ll  \frac{X^{1/3}\,C}{N^{2/3}_1\,q_1\,n^{1/3}_1} := N_2.
	\end{align*}
\end{lemma}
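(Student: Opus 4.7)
The plan is to establish both claims by direct analysis of the defining integral
\[
\mathcal{F}(\ldots) = \int_{\mathbb{R}} V(w)\, \mathcal{A}(n_1^2 N_1 w, m_1, h_1, q)\, \overline{\mathcal{A}(n_1^2 N_1 w, m_2, h_2, q')}\, e\!\left(-\tfrac{n_2 N_1 w}{P}\right) dw,
\]
where $V$ is supported on $[1,2]$ and $P = q_1 q_2 q_2'/n_1 \asymp C^2/(q_1 n_1)$. The uniform bound $\mathcal{F}_{\neq 0}(\ldots) \ll C^2/Q^2$ is immediate from Lemma \ref{lemma12}, which gives $\mathcal{A}(\ldots) \ll q/Q \ll C/Q$ for $q \sim C$: applying it to both factors and integrating against the compactly supported $V$ yields the claim at once, since the explicit exponential has modulus one.

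To localize $n_2$, the plan is to substitute the expanded form of $\mathcal{A}$ from equation \eqref{s15} into both factors (with primed inner variables $x', z', t'$ for $\overline{\mathcal{A}}$), together with the outer $u$-integral from \eqref{s5}, and interchange the order of integration so that the $w$-integral lies innermost. Among the resulting phases, the only ones that depend on $w$ are the two cube-root oscillations
\[
e\!\left(\pm\, \tfrac{3(X z_1 n_1^2 N_1 w)^{1/3}}{q}\right), \qquad e\!\left(\mp\, \tfrac{3(X z_2 n_1^2 N_1 w)^{1/3}}{q'}\right)
\]
coming from the two $GL(3)$ Voronoi transforms, together with the explicit linear phase $e(-n_2 N_1 w/P)$; the remaining $x, y, t, u$ phases and all smooth weights are constant in $w$ and are absorbed into a bounded amplitude. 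On the support $w \sim 1$ and $z_1, z_2 \sim 1$, each cube-root $w$-derivative has magnitude $\ll (X n_1^2 N_1)^{1/3}/C = X^{1/3} n_1^{2/3} N_1^{1/3}/C$, while the linear phase contributes the derivative $n_2 N_1/P \asymp n_2 N_1 q_1 n_1/C^2$. Comparing these, the linear term dominates exactly when
\[
|n_2| \gg \frac{X^{1/3}\, C}{N_1^{2/3}\, q_1\, n_1^{1/3}} = N_2,
\]
and repeated integration by parts in $w$ against the smooth cutoff $V$ then produces arbitrary powers of the small ratio $N_2/|n_2|$, establishing negligibility outside the claimed range.

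The main technical hurdle is controlling the higher $w$-derivatives of the cube-root oscillations once they are treated as amplitude factors after factoring out the dominant linear phase. This is manageable: a $j$-fold derivative of $w \mapsto c\, w^{1/3}$ has size $\asymp c$ on $w \sim 1$, so each integration by parts costs at most a factor of $c/|n_2 N_1/P|$, and by the very definition of $N_2$ this quantity is $\ll 1$ precisely when $|n_2| \gg N_2$. Iterating yields the required arbitrary decay and completes both parts of the lemma.
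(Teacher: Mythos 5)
Your proposal is correct and follows essentially the same route as the paper: the bound $\mathcal{F}_{\neq 0}(\ldots) \ll C^2/Q^2$ by trivial estimation via Lemma \ref{lemma12}, and the localization of $n_2$ by substituting \eqref{s15}, isolating the two cube-root phases and the linear Poisson phase as the only $w$-dependent factors, and integrating by parts to compare the derivative $n_2 N_1/P$ against $(XN_1 n_1^2)^{1/3}/C$. Your closing remark on controlling the higher $w$-derivatives of the cube-root amplitudes is a correct and slightly more careful justification of the repeated integration by parts than the paper spells out.
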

\begin{proof}
From equation \eqref{A41}, we have
	\begin{align}\label{C2}
		\mathcal{F}_{\neq 0}(...) = \int_{\mathbb{R}} V(w)\, \mathcal{A}(n^2_1N_1w,m_1,h_1,q)\,\overline{\mathcal{A}(n^2_1N_1w ,m_2,h_2,q')}\, e\left(-\frac{n_2N_1w}{P}\right)dw.
	\end{align}
	Estimating the above integral trivially by using Lemma \eqref{lemma12}, we will get our desired bound.\\
 
For the proof of the second part of lemma, plugging the expression of the integral $\mathcal{A}(n^2_1N_1w,m_1,h_1,q)$ given in equation \eqref{s15} into the expression of $\mathcal{F}_{\neq 0}(...)$ and considering only the $w$-integral, we have
\begin{align*}
		\int_{\mathbb{R}} V(w)\, e\left(\pm \frac{3(XN_1n^2_1z_1)^{1/3}\,w^{1/3}}{q} \mp \frac{3(XN_1n^2_1z_2)^{1/3}\,w^{1/3}}{q'} \right)\,e\left(-\frac{n_2N_1w}{P}\right)\,dw. 
\end{align*} 
	On applying integration by parts repeatedly to $w$-integral, we will get that it will be negligibly small unless
	\begin{align*}
		|n_2| \ll \frac{P}{N_1}\,\frac{(XN_1n^2_1)^{1/3}}{C} \asymp  \frac{X^{1/3}\,C}{N^{2/3}_1\,q_1\,n^{1/3}_1} := N_2.
	\end{align*} This is our desired result. 
\end{proof}
\begin{lemma}\label{lemma16} 
For the non-zero frequency case i.e. $n_2 \neq 0$, we have
\begin{align*}
S_{\neq 0}(H,X,C) \ll \frac{X}{H^{1/2}}\left(1 + \frac{Q}{H}\right)^{1/2}.
\end{align*}
\end{lemma}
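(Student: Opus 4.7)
The strategy mirrors Lemma \ref{lemma13}: insert the bounds of Lemmas \ref{lemma15} and \ref{L1} into the expression \eqref{A23} for $\Omega$ restricted to $n_2 \neq 0$, and feed the result back into \eqref{A45}. Because Lemma \ref{L1} truncates the non-zero frequencies to $|n_2| \ll N_2 = X^{1/3}C/(N_1^{2/3}q_1 n_1^{1/3})$ and supplies $\mathcal{F}_{\neq 0} \ll C^2/Q^2$, the $n_2$-sum is short but non-empty; the extra saving relative to the zero-frequency case must come from the finer structure of the character sum $\mathfrak{C}_{\neq 0}$ provided by Lemma \ref{lemma15}.

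\emph{Step 1.} Plug Lemma \ref{lemma15}'s decomposition into $\Omega_{\neq 0}$. The $\alpha,\alpha'$-sum is controlled by the linear congruence modulo $q_2 q'_2$, which pins $\alpha'$ to $\alpha$ and reduces the count from $\phi(q_2)\phi(q'_2)$ to at most $\phi(q_2)$; the $\beta'$-sum contributes $q_1/n_1$; and the weighted divisor sums over $d'_1 \mid q_1$, $d'_2 \mid q'_2$, with the constraints $n_1 \alpha' \equiv l_2 \pmod{d'_2}$ and $n_1 \beta' \equiv l_2 \pmod{d'_1}$, contribute at most $(q_1 q'_2)^{\epsilon}$ on average over $\alpha'$ and $\beta'$. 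After this step the remaining arithmetic content is carried entirely by the GCD factor $(q_2, n_1 q'_2 + l_1 n_2)$.

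\emph{Step 2.} The core cancellation comes from the bound
\[
\sum_{0 < |n_2| \ll N_2}(q_2,\, n_1 q'_2 + l_1 n_2) \,\ll\, (N_2 + q_2)(CX)^{\epsilon},
\]
obtained by splitting the GCD into dyadic divisors $d \mid q_2$ and using the standard linear-congruence count (with the compatibility condition $(d, l_1) \mid n_1 q'_2$). This saves essentially a full factor of $q_2$ over the trivial estimate and is precisely the extra source of cancellation in the non-zero regime. The remaining sums over $h_i, m_j$ are handled by the Rankin--Selberg bound \eqref{s16}, with the diagonal cases $h_1 = m_1$ and $h_2 = m_2$ separated from the off-diagonal ones exactly as in the proof of Lemma \ref{lemma13}.

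\emph{Step 3.} Assemble the above into a bound for $\Omega_{\neq 0}$, take its square root, and substitute into \eqref{A45}. The sum over $n_1$ is handled by \eqref{s10}, and the final simplification uses the generic sizes $Q = \sqrt{X}$, $H_0 \asymp C^2/H$, $M_0 \asymp C^2/X$, $N \ll Q^3/X$, followed by optimization over dyadic $C \leq Q$, $H_1 \leq H_0$, $M_1 \leq M_0$. The computation is structurally parallel to Lemma \ref{lemma13}, but the genuine $n_2$-length $N_2$ replaces the collapsed $n_2 = 0$ term, producing an extra factor of roughly $X^{1/4}$ and yielding the claimed $S_{\neq 0}(H,X,C) \ll (X/H^{1/2})(1 + Q/H)^{1/2}$. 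The main obstacle is the joint book-keeping of the character-sum ingredients in Step 1 with the GCD averaging of Step 2: any one alone falls short of the target, and one must additionally verify that the degenerate regimes $l_1 = 0$ or $l_2 = 0$ (where the cancellation mechanism of Step 2 collapses) contribute no more than the dominant off-diagonal term.
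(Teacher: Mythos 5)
Your overall strategy is the paper's: insert Lemmas \ref{lemma15} and \ref{L1} into \eqref{A23}, exploit the GCD structure of the character sum, and assemble through \eqref{A45} and \eqref{s10}. But Step 1 contains a quantitative error that costs you the lemma. The congruence $\overline{\alpha}q'_2-\overline{\alpha}'q_2\equiv -n_2 \pmod{q_2q'_2}$ does not merely ``pin $\alpha'$ to $\alpha$'': reducing it modulo $q_2$ kills the term $\overline{\alpha}'q_2$ and forces $\overline{\alpha}q'_2\equiv -n_2\pmod{q_2}$, which pins $\alpha$ as well (up to a GCD-type ambiguity), while reducing modulo $q'_2$ pins $\alpha'$ independently of $\alpha$. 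The correct count of pairs $(\alpha,\alpha')$ is therefore of size $(q_2,n_2)$, i.e.\ $O(1)$ on average over $n_2$ --- this is exactly what the paper uses, feeding the product $(q_2,n_2)(q_2,n_1q'_2+l_1n_2)$ into the $q_2$-sum to obtain $\ll (n_2,n_1q'_2)\,C/q_1$. Your bound $\phi(q_2)$ loses a full factor of $q_2\sim C/q_1$ in $\Omega_{\neq 0}$, hence $(C/q_1)^{1/2}\asymp X^{1/4}$ in $S_{\neq 0}(H,X,C)$ at the generic modulus $C\asymp Q$; your Step 2 GCD averaging does not recover this, since it is a separate saving that the paper also takes. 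With your accounting the final bound is $X^{5/4}H^{-1/2}(1+Q/H)^{1/2}$, which is not the statement.

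A secondary issue: in Step 1 you claim the weighted divisor sums over $d'_1\mid q_1$, $d'_2\mid q'_2$ contribute $(q_1q'_2)^{\epsilon}$ ``on average over $\alpha'$ and $\beta'$'', but $\alpha'$ has just been pinned by the $n_2$-congruence in the same step, so it is not available for averaging. The paper instead averages the congruences $n_1\alpha'\equiv l_2$ over $l_2=h_2-m_2$ (the $h_2,m_2$-sums), obtaining $M_1\left(q_1q'_2+H_0\right)$ after the $d'$-sums; in the relevant range this is of the size you claim, so this slip is repairable, unlike the $\phi(q_2)$ loss. Your concern about the degenerate cases $l_1=0$ is legitimate but is sidestepped in the paper by performing the GCD sum over $q_2$ for fixed $n_2$ (rather than over $n_2$ for fixed $q_2$), which yields $(n_2,n_1q'_2)$ and is then summed trivially over $n_2$.
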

\begin{proof}
For $n_2 \neq 0$, recall the expression for $\Omega$ from equation \eqref{A23}, we have
\begin{align*}
\Omega_{\neq 0}\, \ll&\,\,\notag \frac{N_1}{H^{1/2}_1 M^{1/2}_1} \sum_{q_2\sim C/q_1}\sum_{q'_2\sim\ C/q_1}\, \,  \,\sum_{h_1\sim H_1}\,\sum_{h_2\sim H_1}{\left|\lambda_f(h_1)\right|^2}\\
 &\,\,\,\,\,\,\,\,\,\,\,\,\times\,\sum_{m_1\sim M_1}\,\sum_{m_2\sim M_1}{|\lambda_g(m_1)|^2}\, \sum_{n_2\in \mathbb{Z}} \,\mathfrak{C}_{\neq 0}(...)\,\mathcal{F}_{\neq 0}(...).
\end{align*}
Plugging the bounds for the character sum $\mathfrak{C}_{\neq 0}(...)$ given in Lemma \eqref{lemma15} and the integral transform $\mathcal{F}_{\neq 0}(...)$ given in equation \eqref{L1}, we get
\begin{align}\label{s90}
	\Omega_{\neq 0} \ll& \frac{N_1\,C^2\,q_1}{H^{1/2}_1 M^{1/2}_1\,Q^2}\sum_{q_2\sim C/q_1}\,\sum_{q'_2\sim\ C/q_1}\,\sum_{0<|n_2|\ll N_2}\,\notag\\
 &\times\,\sum_{h_1\sim H_1}\,|\lambda_f(h_1)|^2\,\sum_{m_1\sim M_1}\, |\lambda_g(m_1)|^2\,
(q_2,\, n_1 q'_2 + (h_1-m_1)\,n_2)\notag\\
&\times\,\mathop{\sum_{\substack{\alpha_2\; \mathrm{mod}\; q_2}}\,\sum_{\substack{\alpha'_2\; \mathrm{mod}\; q'_2}}}_{\overline{\alpha_2}q'_2 - \overline{\alpha'_2}q_2 \equiv -n_2\; \mathrm{mod}\;q_2q'_2} \,\sum_{\substack{\alpha'_1\; \mathrm{mod}\; \frac{q_1}{n_1}}}\,\sum_{d'_2|q'_2}\,\sum_{d'_1|q_1} d'_2 d'_1\,\mathop{\mathop{\sum_{h_2 \sim H_1}\,\sum_{m_2 \sim M_1}}_{n_1 \alpha'_2 \equiv\, (h_2-m_2)\;\mathrm{mod}\;d'_2}}_{n_1 \alpha'_1 \equiv\, (h_2-m_2)\;\mathrm{mod}\;d'_1}\,1.
\end{align}
Next, we count $h_2$ and $m_2$ with the help of corresponding congruence relations. Change the variable $l_2 = h_2 -m_2$ with $l_2 \ll \,H_0 + M_0 \ll H_0$, we obtained
\begin{align*}
\mathop{\mathop{\sum_{h_2 \sim H_1}\,\sum_{m_2 \sim M_1}}_{n_1 \alpha'_2 \equiv\, (h_2-m_2)\;\mathrm{mod}\;d'_2}}_{n_1 \alpha'_1 \equiv\, (h_2-m_2)\;\mathrm{mod}\;d'_1}\,1\,\ll\,\sum_{m_2 \sim M_1}\, \mathop{\mathop{\sum_{l_2 \ll H_0}}_{n_1 \alpha'_2 \equiv\, l_2\;\mathrm{mod}\;d'_2}}_{n_1 \alpha'_1 \equiv\, l_2\;\mathrm{mod}\;d'_1}\,\ll\, M_1\,\left(1 + \frac{H_0\,}{d'_2\,d'_1}\right).   
\end{align*}
Now, estimating the $d_2$ and $d'_2$ sums in equation \eqref{s90} with above the estimate, we arrived at
\begin{align*}
	\Omega_{\neq 0} \ll\,& \frac{N_1\,C^2\,q_1\,M^{1/2}_1}{H^{1/2}_1 \,Q^2}\sum_{q_2\sim C/q_1}\,\sum_{q'_2\sim\ C/q_1}\,\sum_{0<|n_2|\ll N_2}\,\sum_{h_1\sim H_1}\,|\lambda_f(h_1)|^2\,\sum_{m_1\sim M_1}\, |\lambda_g(m_1)|^2\notag\\
 &\,\,\,\,\,\,\,\,\,\,\,\,\,\,\,\times\,(q_2,\, n_1 q'_2 + (h_1-m_1)\,n_2)\mathop{\sum_{\substack{\alpha_2\; \mathrm{mod}\; q_2}}\,\sum_{\substack{\alpha'_2\; \mathrm{mod}\; q'_2}}}_{\overline{\alpha_2}q'_2 - \overline{\alpha'_2}q_2 \equiv -n_2\; \mathrm{mod}\;q_2q'_2} \,\sum_{\substack{\alpha'_1\; \mathrm{mod}\; \frac{q_1}{n_1}}}\,\left(q_1q'_2 + H_0\right).
\end{align*}
Executing the sums over $\alpha_2,\, \alpha'_2$ and $\alpha'_1$, we obtained
\begin{align*}
\Omega_{\neq 0} \ll\,& \frac{N_1\,C^2\,q^2_1\,M^{1/2}_1}{H^{1/2}_1 \,Q^2\,n_1}\,\sum_{q'_2\sim\ C/q_1}\,\left(q_1q'_2 + H_0\right)\,\sum_{h_1\sim H_1}\,|\lambda_f(h_1)|^2\notag\\
 &\,\,\,\times\,\sum_{m_1\sim M_1}\, |\lambda_g(m_1)|^2\,\sum_{0<|n_2|\ll N_2}\,\sum_{q_2\sim C/q_1}\,(q_2, n_2) \,(q_2,\, n_1 q'_2 + (h_1-m_1)\,n_2).
\end{align*}
Notice that the sum over $q_2$ is bounded by $(n_2, n_1q'_2) C/q_1$, which again summed over $n_2$ is bounded by $N_2 C/q_1$. Now we arrived at
\begin{align*}
\Omega_{\neq 0} \ll\,& \frac{N_1\,C^2\,q^2_1\,M^{1/2}_1}{H^{1/2}_1 \,Q^2\,n_1}\,\frac{N_2 C}{q_1}\,\sum_{q'_2\sim\ C/q_1}\,\left(q_1q'_2 + H_0\right)\,\sum_{h_1\sim H_1}\,|\lambda_f(h_1)|^2\,\sum_{m_1\sim M_1}\, |\lambda_g(m_1)|^2. 
\end{align*}
Estimating the sum over $h_1,\,m_1$ using Ramanujan bound on average \eqref{s16}, and executing $q'_2$ sum. Also using the value of $N_2$ from Lemma \ref{L1}, we get
\begin{align*}
\Omega_{\neq 0} \ll\,& \frac{N_1\,C^2\,q^2_1\,M^{1/2}_1}{H^{1/2}_1 \,Q^2\,n_1}\,\frac{X^{1/3}\,C}{N^{2/3}_1\,q_1\,n^{1/3}_1}\,\frac{C^2}{q^2_1}\, M_1\,H_1\,\left(C + H_0\right).     
\end{align*}
With $N_1 \ll N_0 = N/n^2_1$, we finally obtained the following bound 
\begin{align*}
\Omega_{\neq 0} \ll&\, \frac{X^{1/3}\,N^{1/3}\, C^5\, M^{3/2}_0\,H^{1/2}_0}{Q^2\,n^2_1\,q_1} (C+ H_0) .
\end{align*}

Now, using the bound of $\Omega_{\neq 0}$ into the expression of our main term $S_{\neq 0}(H,X,C)$ given in equation \eqref{A45}, we obtained that our main term is dominated by
\begin{align*}
&\,\frac{X^{17/12}}{Q\,H^{1/4}\,C^{3}}\sum_{n_1 \ll C}\, \sum_{n_{1}|q_1|(n_1)^{\infty}} n^{1/3}_1\,\Theta^{1/2}\,\left(\frac{X^{1/3}\,N^{1/3}\, C^5\, M^{3/2}_0\,H^{1/2}_0}{Q^2\,n^2_1\,q_1} (C+ H_0)\right)^{1/2}\\
\ll&\, \frac{X^{19/12}\,N^{1/6}\,H^{1/4}_0\,M^{3/4}_0}{Q^{2}\,H^{1/4}\,C^{1/2}}\,\left({C}+ {H_0}\right)^{1/2}\,\sum_{n_1 \ll C}\, \sum_{n_{1}|q_1|(n_1)^{\infty}}\,\frac{n^{1/3}_1}{n_1\,q^{1/2}_1}\,\Theta^{1/2}\\
\ll& \, \frac{X^{19/12}\,N^{1/6}\,H^{1/4}_0\,M^{3/4}_0}{Q^{2}\,H^{1/4}\,C^{1/2}}\,\left({C}+ {H_0}\right)^{1/2}\,\sum_{n_1 \ll C}\, \frac{\Theta^{1/2}}{n^{7/6}_1}.
\end{align*}
For $n_1$-sum, using the result given in equation \eqref{s10}, we get
\begin{align*}
S_{\neq 0}(H,X,C)\,\ll& \,\frac{X^{19/12}\,N^{1/3}\,H^{1/4}_0\,M^{3/4}_0}{Q^{2}\,H^{1/4}\,C^{1/2}}\,\left({C}+ {H_0}\right)^{1/2}.
\end{align*}
Now, by using 
$N  \ll {Q^3}/{X}, \hspace{0.5cm} M_0 \asymp {C^2}/{Y}, \hspace{0.5cm} H_0 \asymp {C^2}/{H},$  we get,
\begin{align*}
S_{\neq 0}(H,X,C) \,\ll\, \frac{X}{H^{1/2}}\left(1 + \frac{Q}{H}\right)^{1/2}.
\end{align*} which is desired result.
\end{proof}

\vspace{0.2cm}

\subsection{Final Estimates:} As we have,
\begin{align*}
S(H,X) \ll\,   S_{0}(H,X,C) + S_{\neq 0}(H,X,C).
\end{align*} Using the bounds of $S_{0}(H,X,C)$ from Lemma \eqref{lemma13} and $S_{\neq 0}(H,X,C)$ from Lemma \eqref{lemma16}, we get
\begin{align}\label{fe}
	S(H,X) \ll&\, \,\frac{X^{3/4}\,\,}{H^{1/2}\,}\left(1 + \frac{Q}{H}\right)^{1/2} + \frac{X}{H^{1/2}}\left(1 + \frac{Q}{H}\right)^{1/2}.
\end{align}
With $Q \asymp \sqrt{X}$, and $ X^{1/4+\delta} \leq H \leq \sqrt{X}$ with $\delta >0$, we get
\begin{align*}
S(H,X) \ll\, \frac{X^{5/4+\epsilon}}{H}. 
\end{align*} 
This proves the first part of Theorem \ref{main theorem}. Also, in the range $ \sqrt{X} \leq H \leq X$, we have $Q/H \ll 1$. Hence, from equation \eqref{fe}, we get
\begin{align*}
S(H,X) \ll\, \frac{X}{H^{1/2}}.    
\end{align*}This proves the second part of Theorem \ref{main theorem}.\\

\section{Sketch of proof (theorem 2)}
This section will give a brief sketch of the proof of Theorem \ref{TH2}. Now our main object of study is
\begin{align*}
	\mathcal{L}(H,X) =  \frac{1}{H}\sum_{h=1}^\infty \lambda_f(h) V_1\left( \frac{h}{H}\right)\sum_{n=1}^\infty  d_3(n)  \lambda_g (n+h) V_2\left( \frac{n}{X} \right),
\end{align*} where $d_3(n)$ is the triple divisor function. By applying the delta method given in \eqref{Delta}, we arrive at the following expression
\begin{align}\label{B21}
	\mathcal{L}(H,X)=\,\notag& \frac{1}{HQ}\sum_{q\leq Q}\frac{1}{q}\,\,\, \sideset{}{^\star} \sum_{a\bmod q}\int_{\mathbb{R}} W(u)\psi(q,u)\, \sum_{h=1}^\infty \lambda_f(h)  \,e\left(\frac{ah}{q}\right)\,e\left(\frac{hu}{qQ}\right) V_1\left( \frac{h}{H}\right)\\ 
	\notag&\times\,\sum_{n=1}^\infty  d_3(n)\, e\left(\frac{an}{q}\right)\,e\left(\frac{nu}{qQ}\right) V_2\left( \frac{n}{X}\right)\\
	&\times\,\sum_{m=1}^\infty   \lambda_g (m)\,e\left(\frac{-am}{q}\right)\,e\left(\frac{-mu}{qQ}\right)  V_3\left( \frac{m}{Y}\right)\,du.
\end{align} 
Next, we apply the $GL(2)$ and $GL(3)$ Voronoi summation formulae for $h,m$ and $n$ sums, respectively. We first recall the Voronoi summation formula for $d_3(n)$.
Let 
$$\sigma_{0,0}(k_1,k_2)=\sum_{d_1|k_2}\mathop{\sum}_{\substack{{d_2d_1 | k_2} \\ (d_2,k_1)=1}}1.$$
Let $\phi$ be a compactly supported smooth function on  $ (0, \infty )$ and $\tilde{\phi}(s)$ be its Mellin transform. For $\ell= 0$ and $1$, we define
\begin{equation*}
	\gamma_{\ell}(s) :=  \frac{\pi^{-3s-\frac{3}{2}}}{2} \, \left( \frac{\Gamma\left(\frac{1+s+ \ell}{2}\right)}{\Gamma\left(\frac{-s+ \ell}{2}\right)}\right)^3.
\end{equation*}
Set $\gamma_{\pm}(s) = \gamma_{0}(s) \mp \gamma_{1}(s)$ and let 
\begin{align}\label{gl3 integral transform for d_3}
	G_{\pm}(y) = \frac{1}{2 \pi i} \int_{(\sigma)} y^{-s} \, \gamma_{\pm}(s) \, \tilde{g}(-s) \, \mathrm{d}s,
\end{align}
where $\sigma > -1$. By using the above terminology, we now state the $GL(3)$ Voronoi summation formula for $d_3$ in the next lemma.
\begin{lemma} \label{lemma18}
	Let $\phi(x)$ and  $d_3(n)$ be defined as above. Let $a, \bar{a}, q \in \mathbb{Z}$ with $(a,q)=1,$ and  $a\bar{a} \equiv 1(\mathrm{mod} \ q)$. Then we have
	\begin{align} \label{GL3-Voro for d-3}
		\sum_{n=1}^{\infty}& d_3(n) e\left(\frac{an}{q}\right) \phi(n) \notag  \\
		&=\,\frac{1}{2q^2}\widetilde{\phi}(1) \sum_{n_{1}|q} n_1\tau(n_1)P_2(n_1,q) S\left( \bar{a}, 0; \frac{q}{n_1}\right) \notag \\
		&+\,\frac{1}{2q^2}\widetilde{\phi}^\prime(1) \sum_{n_{1}|q} n_1\tau(n_1)P_1(n_1,q) S\left( \bar{a}, 0; \frac{q}{n_1}\right) \notag \\
		&+\frac{1}{4q^2}\widetilde{\phi}^{\prime \prime}(1) \sum_{n_{1}|q} n_1\tau(n_1) S\left( \bar{a}, 0; \frac{q}{n_1}\right)  \notag \\
		&+\,{q} \sum_{\pm} \sum_{n_{1}|q} \sum_{n_{2}=1}^{\infty}  \frac{1}{n_{1} n_{2}} \sum_{n_3|n_1}\sum_{n_4|\frac{n_1}{n_3}}\sigma_{0,0}\left(\frac{n_1}{n_3n_4}, n_2\right) 
		S\left( \bar{a}, \pm n_{2}; \frac{q}{n_1}\right) G_{\pm} \left(\frac{n_{1}^2 n_{2}}{q^3 }\right),
	\end{align} 
	where 
	\begin{align*}
		P_1(n_1,q)=\,\frac{5}{3}\log n_1 -3\log q+3 \gamma-\frac{1}{3\tau(n_1)}\sum_{d|n_1} \log d, \hspace{1cm} \text{and}
	\end{align*} 
	\begin{align*}
		P_2(n_1,q) =\,&  (\log n_1)^2 - 5\log q\log n_1+ \frac{9}{2}(\log q)^2 + 3
		\gamma^2-3\gamma_1 + 7\gamma \log n_1 - 9 \gamma \log q\\& +\frac{1}{\tau(n_1)} \left((\log n_1+ \log q -5\gamma)\mathop{\sum}_{d|n_1}\log d-  \frac{3}{2} \mathop{\sum}_{d|n_1} (\log d)^2\right).
	\end{align*} where $\gamma$ is the Euler constant and $\gamma_1 = (\zeta(s) - (s-1)^{-1})'|_{s=1}$ be the Stieljes constant.
\end{lemma}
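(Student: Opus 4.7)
The plan is to apply the Mellin transform together with the analytic continuation and functional equation of the twisted Dirichlet series
\[
D\!\left(s; \frac{a}{q}\right) \,=\, \sum_{n=1}^\infty d_3(n)\, e\!\left(\frac{an}{q}\right) n^{-s},
\]
viewed as an additive twist of $\zeta(s)^3$. Writing $\phi(n) = (2\pi i)^{-1}\int_{(\sigma)} \widetilde{\phi}(s)\, n^{-s}\, ds$ for $\sigma > 1$ and interchanging orders, the sum becomes $(2\pi i)^{-1}\int_{(\sigma)} \widetilde{\phi}(s)\, D(s;a/q)\, ds$. First I would establish the meromorphic continuation and functional equation for $D(s;a/q)$: decomposing $n = m_1 m_2 m_3$ and reducing each factor modulo the corresponding divisors of $q$ expresses $D(s;a/q)$ in terms of Hurwitz zeta functions, and three applications of the classical $\zeta$ functional equation produce a self-dual relation $s \leftrightarrow 1-s$ with gamma factor $\gamma_{\pm}(s)$ and the expected $q^{-3s}$ conductor factor. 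This identifies $D(s;a/q)$ as meromorphic on $\mathbb{C}$ with a unique triple pole at $s=1$.

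Next, I would shift the contour from $\Re(s) = \sigma$ to $\Re(s) = -\epsilon$, crossing the triple pole at $s=1$. The residue is extracted by multiplying out the Laurent expansions of the three factors appearing together: $\zeta(s)^3$ at $s=1$ (whose principal part and constant coefficient involve $\gamma$ and the Stieltjes constant $\gamma_1$), the expansion $q^{-3s} = q^{-3}\bigl(1 - 3\log q\,(s-1) + \tfrac{9}{2}(\log q)^2 (s-1)^2 - \cdots\bigr)$, the divisor-weighted sum $\sum_{n_1 \mid q} n_1 \tau(n_1)\, n_1^{s-1}\, S(\bar{a}, 0; q/n_1)$ arising from the additive-character unfolding (which produces the $\log n_1$, $\sum_{d\mid n_1}\log d$ and $\sum_{d\mid n_1} (\log d)^2$ contributions), and $\widetilde{\phi}(s) = \widetilde{\phi}(1) + \widetilde{\phi}'(1)(s-1) + \tfrac{1}{2}\widetilde{\phi}''(1)(s-1)^2 + O((s-1)^3)$. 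Collecting the $(s-1)^{-1}$ coefficient then yields precisely the three main terms, with the polynomials $P_2, P_1$ and the constant $1$ multiplying $\widetilde{\phi}(1), \widetilde{\phi}'(1)$ and $\widetilde{\phi}''(1)$ respectively.

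For the remaining integral on $\Re(s) = -\epsilon$, I would substitute the functional equation and change variables $s \to 1-s$. The dual Dirichlet coefficients arise by unfolding the triple product: a factorization $n_1 = n_3 \cdot n_4 \cdot (n_1 / n_3 n_4)$ captures how the three applications of the $\zeta$ functional equation redistribute divisors, producing exactly the combinatorial factor $\sigma_{0,0}(n_1/n_3 n_4, n_2)$, while reassembling the Ramanujan sums arising at each factor yields the Kloosterman sum $S(\bar{a}, \pm n_2; q/n_1)$. Mellin-inverting what remains against $\widetilde{\phi}(-s)$ and $\gamma_{\pm}(s)$ reproduces exactly the integral transform $G_{\pm}(n_1^2 n_2 / q^3)$ defined in \eqref{gl3 integral transform for d_3}, completing the identity.

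The main obstacle is the combinatorial bookkeeping. Identifying the precise arithmetic factor $\sigma_{0,0}(n_1/n_3 n_4, n_2)$ requires careful tracking of how each divisor-level factorization propagates through the three functional-equation steps, and extracting the explicit polynomials $P_1, P_2$ demands a delicate expansion in $\gamma$, $\gamma_1$, $\log q$, $\log n_1$ and the divisor sums $\sum_{d \mid n_1}\log d$ and $\sum_{d \mid n_1}(\log d)^2$. The oscillatory dual sum is structurally parallel to the cuspidal Voronoi formula of Lemma \ref{gl3voronoi}, so the genuinely new computation here is the triple-pole residue, which has no analogue in that setting.
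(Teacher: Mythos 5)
Your outline is correct and follows the standard route: the paper itself gives no proof of this lemma, deferring entirely to Li's lecture notes \cite{r32}, and the argument there proceeds exactly as you describe — Mellin inversion against the additively twisted Dirichlet series for $\zeta(s)^3$, its expression via Hurwitz zeta functions and the resulting functional equation with gamma factor $\gamma_{\pm}(s)$, the triple-pole residue at $s=1$ producing the $\widetilde{\phi}(1),\widetilde{\phi}'(1),\widetilde{\phi}''(1)$ main terms with $P_1,P_2$, and the $s\mapsto 1-s$ substitution unfolding into the $\sigma_{0,0}$-weighted Kloosterman-sum dual side. You correctly identify that the only genuinely delicate points are the combinatorial evaluation of the character sums into $S(\bar{a},\pm n_2;q/n_1)$ and the explicit Laurent-coefficient bookkeeping for $P_1$ and $P_2$, neither of which presents a conceptual obstruction.
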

\begin{proof}
	See \cite{r32} for the proof. 
\end{proof}

On applying the above Lemma \eqref{lemma18} to $n$-sum with $\phi(x) = e\left(\frac{xu}{qQ}\right) V_2\left( \frac{x}{X}\right)$ and $GL(2)$ Voronoi summation formula from Lemma \eqref{voronoi} to $m,h$-sums in our main term $\mathcal{L}(H,X)$ given in equation \eqref{B21}, we arrive at,
\begin{align*}
	\mathcal{L}(H,X) = \mathcal{L}_1(H,X)+\mathcal{L}_2(H,X)+\mathcal{L}_3(H,X)+\mathcal{L}_4(H,X),
\end{align*}

where $\mathcal{L}_i(H, X), 1\leq i \leq 4$ are the expressions obtained after applying the Voronoi summation formula to $m,h,n$ sums and multiplying the four terms of $n$-sum to $m$ and $h$ sums, in the same sequence as above in Lemma \eqref{lemma18}. We notice that the analysis of $\mathcal{L}_4(H, X)$ will be exactly similar to that of $S(H, X)$ given in Lemma \ref{lemma11}. So, we will focus only on $\mathcal{L}_i(H, X), i=1,2,3$. Consider the term $\mathcal{L}_1(H,X)$ first. Let's assume the generic case $q \sim Q \asymp \sqrt{X}$ for simplicity. After applying the $GL(2)$ Voronoi summation formula, we get savings of size $H/Q$ and $X/Q$ in the sum over $h$ and $m$, respectively. Also, after applying the Voronoi summation formula to the sum over $n$, in $\mathcal{L}_1(H,X)$, it will become
\begin{align*}
	\frac{1}{2q^2}\widetilde{\phi}(1) \sum_{n_{1}|q} n_1\tau(n_1)P_2(n_1,q) S\left( \bar{a}, 0; \frac{q}{n_1}\right). 
\end{align*}
Since we can get $\widetilde{\phi}(1) \ll X$ and trivially estimating the Kloosterman sum by expanding it, we found that the $n_1$-sum is bounded by $Q$ and hence the whole above term is bounded by $X/Q$. So we get a saving of size $Q$ over the trivial bound $X$. Also, we can get a saving of size $\sqrt{Q}$ in the sum over $a$. Our total saving over the trivial bound $X^2$ becomes
\begin{align*}
	\frac{H}{Q}\,\times \frac{X}{Q}\,\times Q\,\times \sqrt{Q} = \frac{HX}{\sqrt{Q}}.
\end{align*}
This saving will be as good as we need if,
\begin{align*}
	\frac{HX}{\sqrt{Q}}\, > \,X \, \Longleftrightarrow \,\,\,\, H >  X^{1/4+\epsilon}.
\end{align*}
Hence, with this choice of size of $H$, we will get our desired result for $\mathcal{L}_1(H, X)$. In a similar way, we can treat the remaining sums $\mathcal{L}_2(H, X)$ and $\mathcal{L}_3(H, X)$ and get the same result. Hence, we will get the desired result for our main sum $\mathcal{L}(H, X)$ in Theorem \ref{TH2}.
\vspace{0.5cm}

{\bf Acknowledgement:} 
The authors wish to express their gratitude to Professor Ritabrata Munshi and the anonymous referee for their valuable comments and suggestions. They also extend their appreciation to Sumit Kumar and Prahlad Sharma for engaging in numerous insightful discussions during the work. Additionally, the authors would like to acknowledge the Indian Institute of Technology Kanpur for fostering an exceptional academic environment. It is important to note that during the course of this research, S. K. Singh received partial support from the D.S.T. Inspire Faculty Fellowship under grant number DST/INSPIRE/04/2018/000945.

{}	


\begin{thebibliography}{} 
	
	\bibitem{r2}
	V. Blomer:
	\emph{Rankin-Selberg $L$-functions on the critical line},
	{Manuscr. Math.}, \textbf{117}\,$(2005)$, $111$--$133$.

 
       \bibitem{r1}	 
	V. Blomer:
	\emph{Shifted convolution sums and subconvexity bounds for automorphic $L$-functions},
	{Int. Math. Res. Not.}, \textbf{2004(73)}\,$(2004)$, $3905$--$3926$.
	
	
	\bibitem{r6}
	W. Duke, J. B. Friedlander, and H. Iwaniec:
	\emph{A quadratic divisor problem},
	{Invent. Math.}, \textbf{115}\,$(1994)$, $209$--$217$.


       \bibitem{r24}
	D. Goldfeld:
	\emph{Automorphic forms and $L$-functions for the group $GL (n, R)$}, \textbf{99}\,$(2006)$, Cambridge
       University Press.


      \bibitem{r30}
	B. Huang:
	\emph{On the Rankin-Selberg problem},
	{Math. Ann.}, \textbf{381(3)}\,$(2021)$, $1217$--$1251$.


       \bibitem{r29}
	{H. Iwaniec, and  E. Kowalski:}
	\emph{Analytic number theory}, \textbf{53}\,$(2021)$,
	{Amer. Math. Soc.}.



       \bibitem{r25}
	H. Kim, and P. Sarnak:
	\emph{Refined estimates towards the Ramanujan and Selberg conjectures},
	{J. Amer. Math. Soc}, \textbf{16(1)}\,$(2003)$, $175$--$181$.
	

       \bibitem{r12}
      E. Kowalski, P. Michel, and J. Vanderkam: 
      \emph{Rankin-Selberg $L$-functions in the level aspect},
      {Duke Mathematical Journal}, \textbf{114(1)}\,$(2002)$, $123$--$191$.



      \bibitem{r31}
	{X. Li:}
	\emph{Bounds for $GL(3)\times GL(2)$  $L$-functions and $GL(3)$ $L$-functions},
	{Ann. Math.}, \textbf{173(1)}\,$(2011)$, $301$--$336$.
	

	
        \bibitem{r32}
        {X. Li:}
		\emph{The Voronoi formula for the triple divisor function},
		{Lecture Notes from the Conference CIMPA-UNESCO-CHINA Research School}, ${(2010)}$.


        \bibitem{r34}
       X. Li: \emph{The central value of the Rankin-Selberg $L$-functions}, Geom. Funct. Anal., \textbf{18}$(2009)$, $1660$–-$1695$.


       \bibitem{r26}
	T. Meurman:
	\emph{On exponential sums involving the Fourier coefficients of Maass wave forms.}, J. reine. angew.  Math., \textbf{384}\,$(1988)$, $192$--$207$.


	\bibitem{r15}
	P. Michel:
	\emph{The subconvexity problem for Rankin-Selberg $L$-functions and 
       equidistribution of Heegner points},
	{Ann. Math.}, \textbf{160}\,$(2004)$, $185$--$236$.


      \bibitem{r19}
	R. Munshi:
	\emph{Shifted convolution sums for $GL (3) \times GL (2)$},
	{Duke Math. J.}, \textbf{162(13)}\,$(2013)$, $2345$--$2362$.


     
       \bibitem{r35}
       R. Munshi: \emph{Subconvexity for $GL(3) \times GL(2)$ $L$-functions in $t$-aspect}, J. Eur. Math. Soc., \textbf{24(5)} $(2022)$, $1543$--$1566$.



      \bibitem{r18}
	{N. J. E. Pitt:}
	\emph{On shifted convolutions of $\zeta^3(s)$ with automorphic $L$-functions},
	{Duke Math. J.}, \textbf{77(2)}\,$(1995)$, $382$--$406$.


	\bibitem{r16}
	P. Sarnak:
	\emph{Estimates for Rankin--Selberg $L$-functions and quantum unique ergodicity},
	{J. Funct. Anal.}, \textbf{184}\,$(2001)$, $419$--$453$.
	

       
      \bibitem{r17}
	A. Selberg: 
	\emph{On the estimation of Fourier coefficients of modular forms}, {Proc. Sympos. Pure Math.}, \textbf{8}\,$(1965)$, $1$--$15$.



       \bibitem{r21} 
	{Q. Sun:}
	\emph{Averages of shifted convolution sums for $GL (3)\times GL (2)$}, {J. Number Theory}, \textbf{182}\,$(2018)$, $344$--$362$.

	
	
	\bibitem{r20}
	P. Xi:
	\emph{A shifted convolution sum for $GL (3) \times GL (2)$},
	{Forum Math.}, \textbf{30(4)}\,$(2018)$, $1013$--$1027$.
	
	
	
	
  
\end{thebibliography}
\end{document}